\documentclass[reqno, 12pt,a4letter]{amsart}
\usepackage{amsmath,amsxtra,amssymb,latexsym, amscd,amsthm}
\usepackage[utf8]{inputenc}
\usepackage[mathscr]{euscript}
\usepackage{mathrsfs}
\usepackage[english]{babel}
\usepackage{color}
\usepackage[active]{srcltx}
\usepackage{enumerate}
\usepackage{mathabx}
\usepackage{tikz}
\usetikzlibrary{calc}
\usetikzlibrary{math}

\setlength{\parindent}{25pt}
\setlength{\parskip}{1.0pt}
\setlength{\oddsidemargin}{-.1cm}
\setlength{\evensidemargin}{-.1cm}
\setlength{\textwidth}{6.5in}
\setlength{\textheight}{9in}
\setlength{\headheight}{0in}
\setlength{\topmargin}{-1.5cm}
\setlength{\headsep}{1.25cm}
\setlength{\footskip}{.7in}
\setlength{\baselineskip}{15pt}

\newtheorem {theorem}{Theorem}[section]
\newtheorem {corollary}{Corollary}[section]
\newtheorem {lemma}{Lemma}[section]
\newtheorem {claim}{Claim}
\newtheorem {proposition}{Proposition}[section]

\theoremstyle{definition}
\newtheorem{definition}{Definition}[section]
\newtheorem{example}{Example}[section]
\newtheorem{remark}{Remark}[section]

%[section]

\newcommand{\vol}{{\rm vol}}

\newcommand{\dist}{{\rm dist}}

\newcommand{\sing}{{\rm sing}}

\newcommand{\R}{\mathbb R}
\newcommand{\B}{\mathbb B}
\newcommand{\C}{\mathbb C}
\newcommand{\K}{\mathbb K}
\newcommand{\SA}{\mathcal S}

%=============================================================================
\def\EES{{\accent"5E e}\kern-.5em\raise.8ex\hbox{\char'23 }}
\def\ow{o\kern-.42em\raise.82ex\hbox{
   \vrule width .12em height .0ex depth .075ex \kern-0.16em \char'56}\kern-.07em}
\def\OW{o\kern-.460em\raise1.36ex\hbox{
\vrule width .13em height .0ex depth .075ex \kern-0.16em
\char'56}\kern-.07em}
\def\DD{D\kern-.7em\raise0.4ex\hbox{\char '55}\kern.33em}
%=============================================================================

\title{Some variational properties of tangent directions at infinity of real algebraic sets}

\author{S\~i Ti\d{\^e}p \DD inh$^\dagger$}
\address{Institute of Mathematics, VAST, 18, Hoang Quoc Viet Road, Cau Giay District 10307, Hanoi, Vietnam}
\address{Institute of Mathematics, Polish Academy of Sciences, Śniadeckich 8, 00-656 Warsaw, Poland}
\email{dstiep@math.ac.vn}

\author{Ti\EES n-S\OW n Ph\d{A}m$^{\ddag}$}
\address{Department of Mathematics, University of Dalat, 1 Phu Dong Thien Vuong, Dalat, Vietnam}
\email{sonpt@dlu.edu.vn}

%\thanks{$^\dagger$The first author is partially supported by Vietnam National Foundation for Science and Technology Development (NAFOSTED) grant 101.04-2019.305}
%\thanks{$^{\ddag}$The second author is partially supported by Vietnam National Foundation for Science and Technology Development (NAFOSTED), grant 101.04-2019.302}

\subjclass{Primary 32B20; Secondary 14P}

\keywords{Tangent cones at infinity; set of tangent directions at infinity; asymptotic critical value; volume}

\date{ \today}

\begin{document}

\begin{abstract} In this paper, we relate the set of asymptotic critical values of a polynomial function $f$ with the set of discontinuity of two functions, the multivalued function which associate to each value $t$ the set of tangent directions at infinity of the fiber $f^{-1}(t)$ and the composition of the $(n-2)$-dimensional volume function with the first one. This gives necessary conditions of equisingularity at infinity for the family of the fibers of a real polynomial function.
\end{abstract}

\maketitle

\pagestyle{plain}

\section{Introduction} 
Let $f:\K^n\to\K$ be a polynomial function where $\K=\C$ or $\K=\R$. It is well-known that $f$ is a local $C^\infty$-trivial fibration outside a finite subset of $\K$~\cite{Thom1969}, the smallest such set is called the bifurcation set of $f$, denoted by $B(f).$ In general the set $B(f)$ is larger than the set $K_0(f)$ of critical values of $f$ since it contains also the set $B_\infty(f)$ of bifurcation values at infinity of $f$. Roughly speaking, the set $B_\infty(f)$ consists of points at which $f$ is not a locally trivial bundle at infinity (i.e., outside a large ball). For $n=2$, the set $B_\infty(f)$ can be effectively computed in the complex case~\cite{Chadzynski2003,HaHV1984} as well as it has been described explicitly in the real case~\cite{Coste2001,Kurdyka2014-1,Tibar1999}. So far, the characterization of bifurcation values at infinity of polynomials in several variables ($n>2$) is still an open challenging problem. Up to now, most of the studies carried out in this direction require extra conditions. In the complex case, the result of~\cite{HaHV1984} was generalized by Parusi{\'n}ski under the assumption of isolated singularities at infinity~\cite{Parusinski1995}, then by Siersma and Tib\u{a}r for complex polynomials with isolated $W$-singularities at infinity~\cite{Siersma1995} and by~\cite{HaHV2011-1} for polynomial mappings of one dimensional fibers. In the real case, some sufficient condition for the existence of vanishing components at infinity for polynomial functions are given in~\cite{Dinh2012,Dinh2013}.  Moreover, when the fibers of a polynomial mapping are real curves, bifurcation values of $f$ can also be characterized \cite{Joita2017}.

In general, it is not easy to check if a value is a bifurcation value at infinity or not. People usually consider a larger but finite set which contains $B_\infty(f)$~\cite{HaHV2008-5,Kurdyka2000-1,Parusinski1997}, the set $K_\infty(f)$ of asymptotic critical values of $f$, to control $B_\infty(f)$. The set of {\em asymptotic critical values} of $f$ is defined as follows
$$K_\infty(f) := \left\{\begin{array}{lllll}
y \in {\mathbb K}& : & \textrm{there exists a sequence } x^k \in \mathbb{K}^n \textrm{ such that}\\
&& \|x^k\| \rightarrow +\infty, \ f(x^k) \rightarrow y \text{ and } \|x^k\| \| \nabla f(x^k) \| \to 0
\end{array}\right\}.$$
It is the set of points where Malgrange's condition fails to hold.
Generally, checking if a value is an asymptotic critical value is easier than checking if it is a bifurcation value at infinity since the set of {\em generalized critical values} $K(f)=K_0(f)\cup K_\infty(f)$ can be effectively computed \cite{Dinh2021-1,Jelonek2003-1,Jelonek2003,Jelonek2005,Jelonek2014,Jelonek2017}.

Now let $f$ be a real polynomial function in $n$ variables. We gives some necessary conditions of equisingularity at infinity for the family of the fibers of $f$ by studying the variation of the set of tangent directions $D_\infty(t)$ at infinity of the fibers of $f$ (Definition~\ref{AlgebraicTangent}). In fact, we will consider the two following functions $t\mapsto D_\infty(t)$ and $t\mapsto \vol_{n-2}(D_\infty(t)).$ The first function is the multivalued function which associates to each value $t\in\R$ the set of tangent directions $D_\infty(t)$ at infinity of the fiber $f^{-1}(t)$ and the second one is the composition of the $(n-2)$-dimensional volume function with the first one. It turns out that these functions are locally Lipschitz  outside $K_\infty(f)$ and bad behaviors only occur when $t$ is an asymptotic critical value. Precisely, the main results of the paper are the following.

\medskip\noindent \textbf{Theorem~\ref{LocallyLipschitzHausdorff}.} 
{\em Assume that $t_0\not\in K_\infty(f)$, where $f \colon \mathbb{R}^n \rightarrow \mathbb{R}$ is a polynomial function with $n\geqslant 2$. Then there exist some constants $c>0$ and $\delta>0$ such that for all $t_1,t_2\in(t_0-\delta,t_0+\delta)$, we have 
$$
\dist^{g}_{H}(D_\infty({t_1}),D_\infty({t_2}))\leqslant c|t_1-t_2|,
$$
where $\dist_{H}^g(.,.)$ denotes the Hausdorff distance with respect to the intrinsic metric in $\mathbf{D}_\infty^a$ and $\mathbf{D}_\infty^a$ is the set of algebraic tangent directions at infinity of $f$ defined by~\eqref{AT}.}

\medskip

\medskip\noindent \textbf{Theorem~\ref{VolumeContinuous}.} 
{\em Let $f \colon \mathbb{R}^n \rightarrow \mathbb{R}$ be a polynomial function  with $n\geqslant 2$. Then the volume function $t\mapsto \vol_{n-2}(D_\infty(t))$ is locally Lipschitz on $\R\setminus K_\infty(f).$}\medskip

Although our results are inspired more or less by the results given in \cite{Dutertre2019,Grandjean2008,Grandjean2009}, which study the variation of the total curvature and the total absolute curvature of the fibers of real polynomial or definable functions, our approach is somehow different. As the total curvature of the fibers is related to their topology by the Gauss-Bonnet-Chern Theorem, it seems that the set of tangent directions at infinity of the fibers and their $(n-2)$-dimensional volume are more related to the geometry at infinity of the fibers. Therefore, this gives a different point of view for the problem of studying singularities at infinity of real polynomial functions.

The paper is structured as follows. In Section~\ref{semi-algebraicGeometry}, we will recall some known results of Semi-Algebraic Geometry. The notions of geometric and algebraic tangent cones at infinity, their corresponding sets of tangent directions at infinity and some basic properties of these sets are given in Section~\ref{AGTangent}. Section~\ref{Main} contains the results on Lipschitz continuity of the sets of tangent directions at infinity and their volume. 

\section{semi-algebraic geometry}\label{semi-algebraicGeometry}

\subsection{Notation}

Let us start with some notation which will be used consistently throughout the paper. Let $\mathbb B^n_r(x)$ and $\mathbb S^{n-1}_r(x)$ denote, respectively, the open ball and the sphere of radius $r$ centered at $x$ in $\mathbb R^n$. For simplicity, we write $\mathbb B^n_r$ and $\mathbb S^{n-1}_r$ if $x=0$; and write $\mathbb B^n$ and $\mathbb S^{n-1}$ if $x=0$ and $r=1$. 

Let $X$ be a subset of $\mathbb R^n$.  The closure and the boundary of $X\subset\mathbb R^n$ is denoted by $\overline X$ and $\partial X$ respectively. 
Designate by  $\sing(X)$ the set of singular points of $X$, which is the set of points where $X$ is not a $C^1$-manifold.

We denote by $\dist(\cdot, \cdot)$ the Euclidean distance on $\mathbb R^n$ and set 
$$\mathscr{N}_r(X):=\{x\in\R^n :\ \dist(x,X)\leqslant r\},$$ 
the closed neighborhood of radius $r$ of $X$ in $\R^n$. 

Let $\dist^{X}(\cdot, \cdot)$ be the intrinsic metric in $X$.  The Hausdorff distance on $\mathbb R^n$ and the Hausdorff distance with respect to the intrinsic metric in $X$ are denoted respectively by $\dist_H(\cdot, \cdot)$ and $\dist_H^g(\cdot, \cdot).$

%Let $v,w\in\mathbb S^1$, the anticlockwise arc from $v$ to $w$ is denoted by $\overgroup{v,w}$.

\subsection{Definition and basic properties}

In this part, we recall some notions and basic results of Semi-Algebraic Geometry, which can be found in~\cite{Benedetti1990, Bierstone1988, Bochnak1998, Yomdin2004}.

\begin{definition}{\rm
\begin{enumerate} [{\rm (i)}]
\item A set $X \subset \R^n$ is said to be {\em semi-algebraic} if it can be represented in a form $X = \bigcup_{i=1}^pX_i,$ with $X_i=\bigcap_{j=1}^{j_i}X_{ij}$, where each $X_{ij}$ has one of the following forms
$$\{x \in\R^n : \ f_{ij}(x) = 0\}, \quad \{x \in\R^n : \ f_{ij}(x) > 0\}$$
and each $f_{ij}$ is a polynomial (of degree $d_{ij}$). Clearly a representation of $X$ in the above form is not unique.

\item The set of data: 
$$\left(n,p,j_1,\ldots,j_p,(d_{ij})_{\substack{i=1,\ldots,p\\ j=1,\ldots,j_i}}\right)$$ 
is called the {\em diagram} $\mathscr{D}(X)$ of (the representation of) $X$. 

\item Let $X \subset \mathbb{R}^n$ and $Y \subset \mathbb{R}^p$ be semi-algebraic sets. A mapping $f \colon X \to Y$ is said to be {\em semi-algebraic} if its graph
$$\{(x, y) \in X \times Y : \ y = f(x)\}$$
is a semi-algebraic subset of $\mathbb{R}^n\times\mathbb{R}^p.$
\end{enumerate}
}\end{definition}

An important fact of Semi-Algebraic Geometry is Tarski--Seidenberg Theorem~\cite{Benedetti1990, Bierstone1988, Bochnak1998, Seidenberg1954, Tarski1931, Tarski1951, Yomdin2004}.

\begin{theorem}[Tarski--Seidenberg Theorem - first form] %\label{TarskiSeidenberg1}
Let $X\subset\mathbb R^n$ be a semi-algebraic set. Then the image of $X$ by a semi-algebraic mapping is semi-algebraic. Moreover, its diagram depends only on the diagram of $X.$
\end{theorem}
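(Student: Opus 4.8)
The plan is to establish the Tarski--Seidenberg theorem in its projection form and then deduce the general statement about images under semi-algebraic mappings; the ``moreover'' part on diagrams is then tracked through the argument by bookkeeping. First I would reduce the statement to the fundamental case of a single projection $\pi\colon\mathbb R^{n+1}\to\mathbb R^n$ forgetting the last coordinate: indeed, an arbitrary semi-algebraic mapping $f\colon X\to Y$ with $X\subset\mathbb R^n$, $Y\subset\mathbb R^p$ has a semi-algebraic graph $\Gamma\subset\mathbb R^n\times\mathbb R^p$, and $f(X)=\pi_p(\Gamma)$ where $\pi_p\colon\mathbb R^n\times\mathbb R^p\to\mathbb R^p$ is the projection onto the last $p$ coordinates; factoring $\pi_p$ as a composition of $n$ single-coordinate projections reduces everything to showing that the image of a semi-algebraic set under one such projection is semi-algebraic. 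Since semi-algebraic sets are exactly the sets definable by a quantifier-free first-order formula in the language of ordered fields with parameters in $\mathbb R$ (this is the content of the definition via finite unions of intersections of polynomial sign conditions), the projection of a semi-algebraic set is defined by prefixing one existential quantifier, so the theorem is equivalent to quantifier elimination for the single-quantifier case.

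The core step is therefore a quantifier-elimination lemma: given finitely many polynomials $g_1,\dots,g_m\in\mathbb R[x_1,\dots,x_n,t]$, the set
$$
\{x\in\mathbb R^n : \exists\, t\in\mathbb R \text{ such that } g_1(x,t)\,\square_1\,0,\ \dots,\ g_m(x,t)\,\square_m\,0\},
$$
where each $\square_i$ is $=$ or $>$, is semi-algebraic in $\mathbb R^n$. I would prove this by the classical Cohen--H\"ormander style argument, viewing the $g_i(x,\cdot)$ as one-variable polynomials in $t$ with coefficients depending polynomially on $x$. The key observation is that the truth value of the quantifier-free formula as $t$ ranges over $\mathbb R$ depends only on the sign pattern of the $g_i(x,t)$ and their $t$-derivatives at the finitely many ``critical'' values of $t$ (roots of the $g_i(x,\cdot)$ and of their relevant derivatives and resultants) and on the signs between consecutive such roots; by a suitable Sturm/Thom-encoding one can describe, uniformly in $x$, how many real roots each $g_i(x,\cdot)$ has and in what order they interlace, purely in terms of polynomial sign conditions on $x$ (the subresultant coefficients, discriminants, and resultants of the $g_i$). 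Partitioning $\mathbb R^n$ into the finitely many semi-algebraic cells on which all these auxiliary polynomials have constant sign, one checks that on each cell the existence of a witness $t$ is a constant truth value; collecting the cells where it holds gives a semi-algebraic description of the projection. The ``moreover'' statement follows because all the auxiliary polynomials (derivatives, subresultants, resultants, discriminants) and the combinatorial case split depend only on the degrees $d_{ij}$ and the combinatorial shape of the representation, i.e.\ only on $\mathscr D(X)$, and not on the actual coefficients.

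The main obstacle is the uniformity in the parameter $x$: an individual polynomial $g_i(x,\cdot)$ can drop degree or gain/lose real roots as $x$ varies, so one must carefully stratify $\mathbb R^n$ according to which leading coefficients vanish and handle each stratum by induction on the degree in $t$. I would organize this as a double induction, on the number of polynomials $m$ and on the maximal $t$-degree $d=\max_i\deg_t g_i$, with the base case $d=0$ (no $t$ appears) being trivial and the inductive step replacing $g_i$ of top degree by its derivative $\partial g_i/\partial t$ and by pairwise resultants, which have strictly smaller degree in $t$ or reduce $m$. Once the lemma is in place, composing projections and unwinding the reduction at the start of the argument yields the theorem as stated, and since at every stage the new data (the auxiliary polynomials and the resulting diagram of the image) is determined algorithmically from the diagram of the input, the final clause ``its diagram depends only on the diagram of $X$'' is immediate.
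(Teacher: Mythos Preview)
The paper does not actually prove this theorem: it is stated as a classical background result with references to \cite{Benedetti1990, Bierstone1988, Bochnak1998, Seidenberg1954, Tarski1931, Tarski1951, Yomdin2004}, and no proof environment follows the statement. So there is no ``paper's own proof'' to compare against.

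That said, your outline is a correct sketch of one of the standard proofs (the Cohen--H\"ormander/subresultant route to quantifier elimination). The reduction $f(X)=\pi_p(\Gamma)$ followed by factoring $\pi_p$ into single-coordinate projections is the right first move, and the double induction on $(m,\max_i\deg_t g_i)$ with the degree-drop stratification is the standard way to handle the uniformity-in-$x$ obstacle you identify. One small clarification worth making explicit: as stated, the ``moreover'' clause should be read as saying that the diagram of the image is determined by the diagram of the \emph{graph} $\Gamma$ of the mapping (which in particular encodes the diagram of $X$), not by $\mathscr D(X)$ alone; your argument already does the right thing here since you work with $\Gamma$ from the outset, but the bookkeeping should track $\mathscr D(\Gamma)$ through the projections. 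With that caveat, the proposal is sound and matches the treatments in the cited references.
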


An equivalent version of Tarski--Seidenberg Theorem, that we give below, asserts that semi-algebraic sets can be expressed using formulas containing quantifiers. Let us first specify the notion of {\em first-order formula}:
\begin{enumerate} [{\rm 1.}]
\item If $P\in\mathbb{R}^n[x_1,\ldots,x_n]$, then $P=0$ and $P>0$ are first-order formulas.
\item If $\Phi$ and $\Psi$ are first-order formulas, then ``$\Phi\vee\Psi$", ``$\Phi\wedge\Psi$", ``$\neg\Phi$" % (often denoted by 
are first-order formulas.
\item If $\Phi$ is a first-order formula and $x$ is a variable ranging over $\mathbb{R}$, then $\exists x\Phi$ and $\forall x\Phi$ are first-order formulas.
\end{enumerate}

The formulas obtained by using only rules $1$ and $2$ are called {\em quantifier-free formulas}. By definition, a subset $A\subset\mathbb{R}^n$ is semi-algebraic if and only if there is a quantifier-free formula $\Phi(x_1,\ldots,x_n)$ such that 
$$(x_1,\ldots,x_n)\in A\leftrightarrow \Phi(x_1,\ldots,x_n).$$

\begin{theorem}[Tarski--Seidenberg Theorem - second form] \label{TarskiSeidenberg2}
For any first-order formula $\Phi(x_1,\ldots,x_n)$, the set of $(x_1,\ldots,x_n)\in\mathbb{R}^n$ which satisfies $\Phi(x_1,\ldots,x_n)$ is semi-algebraic. In other words, every first-order formula is equivalent to a quantifier-free formula.
\end{theorem}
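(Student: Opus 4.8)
The plan is to derive the second form from the first form by induction on the complexity (number of logical connectives and quantifiers) of the first-order formula $\Phi$, following the three generating rules in the definition. For a formula $\Psi(x_1,\ldots,x_m)$, write $A_\Psi\subseteq\mathbb{R}^m$ for the set it defines; I want to show $A_\Psi$ is semi-algebraic for every $\Psi$. The ``in other words'' part then follows: once $A_\Psi$ is semi-algebraic it is, by the very definition of a semi-algebraic set, a finite Boolean combination of sets $\{P=0\}$ and $\{P>0\}$, hence definable by a quantifier-free formula; and conversely a set defined by a quantifier-free formula is semi-algebraic, by the connective and base cases below. The base case (rule~1) is trivial: if $\Phi$ is $P=0$ or $P>0$ with $P\in\mathbb{R}[x_1,\ldots,x_n]$, then $A_\Phi$ is semi-algebraic by definition.

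For the connective step (rule~2), suppose $\Phi,\Psi$ define semi-algebraic sets. After inserting dummy variables so that their free-variable tuples agree — that is, replacing a set by a cylinder over it, which stays semi-algebraic — I may assume both live in the same $\mathbb{R}^m$, with defining sets $A,B$. Then $\Phi\vee\Psi$ defines $A\cup B$ and $\Phi\wedge\Psi$ defines $A\cap B$: the union is semi-algebraic directly from the $\bigcup_i\bigcap_j$ presentation, and the intersection is handled by distributing, $(\bigcup_i\bigcap_j X_{ij})\cap(\bigcup_k\bigcap_l Y_{kl})=\bigcup_{i,k}\big(\bigcap_j X_{ij}\cap\bigcap_l Y_{kl}\big)$. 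For $\neg\Phi$, which defines $\mathbb{R}^m\setminus A$, I use that the class of semi-algebraic sets is closed under complement: by De Morgan $(\bigcup_i\bigcap_j X_{ij})^c=\bigcap_i\bigcup_j X_{ij}^c$, while $\{P=0\}^c=\{P>0\}\cup\{-P>0\}$ and $\{P>0\}^c=\{P=0\}\cup\{-P>0\}$, so each $X_{ij}^c$ is a finite union of basic sets; distributing the outer intersection over these finite unions puts $A^c$ back into the form $\bigcup(\text{finite intersection of basic sets})$.

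The quantifier step (rule~3) is where the first form enters. Suppose $\Phi(x,x_1,\ldots,x_m)$ defines a semi-algebraic set $A\subseteq\mathbb{R}^{m+1}$, with the quantified variable placed, say, as the first coordinate. Then $\exists x\,\Phi(x,x_1,\ldots,x_m)$ defines exactly the image $\pi(A)$, where $\pi\colon\mathbb{R}^{m+1}\to\mathbb{R}^m$, $(x,x_1,\ldots,x_m)\mapsto(x_1,\ldots,x_m)$, is the coordinate projection; since $\pi$ is semi-algebraic (its graph is cut out by linear equations), the first form of the Tarski--Seidenberg theorem gives that $\pi(A)$ is semi-algebraic. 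The universal quantifier reduces to this via $\forall x\,\Phi\equiv\neg\,\exists x\,(\neg\Phi)$, using the complement step already established. Applying these cases along the construction tree of $\Phi$ completes the induction.

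I do not expect a genuine obstacle here: the entire geometric content sits in the first form, i.e.\ in eliminating a single existential quantifier by projection, and everything else is bookkeeping. The only points needing a little care are purely formal — matching free-variable tuples when combining subformulas (the cylinder remark above) and checking closure under complementation from the explicit $\bigcup\bigcap$ description — and both are routine.
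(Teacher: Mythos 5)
The paper does not actually prove this theorem: both forms of Tarski--Seidenberg are recalled as background results, with the proofs deferred to the cited references (Benedetti--Risler, Bochnak--Coste--Roy, etc.). So there is no in-paper proof to compare against. That said, your argument is correct and is the standard derivation of the second form from the first. The induction on formula complexity is sound: the base case is immediate from the definition of a semi-algebraic set; closure under $\vee$, $\wedge$, $\neg$ follows from the explicit $\bigcup_i\bigcap_j$ presentation together with distributivity and De Morgan as you write them; the existential quantifier is eliminated by observing that $\exists x\,\Phi$ defines the coordinate projection of the set defined by $\Phi$, and a coordinate projection is a semi-algebraic map, so the first form applies; and $\forall$ reduces to $\neg\exists\neg$. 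The cylinder remark to align free-variable tuples is the right bookkeeping step. One very small point worth making explicit: the ``in other words'' clause also uses the (easy, already-proved) converse that any quantifier-free formula defines a semi-algebraic set, so the two notions genuinely coincide; you do state this, but it is the only place where the equivalence, rather than just the one implication, is needed.
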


We list below some basic properties of semi-algebraic sets and mappings:
\begin{enumerate} [{\rm (i)}]
\item The class of semi-algebraic sets is closed with respect to Boolean operators; a Cartesian product of semi-algebraic sets is a semi-algebraic set.

\item The closure, the interior, the boundary and each connected components of a semi-algebraic set $X\subset\mathbb R^n$ are semi-algebraic sets. Furthermore, the diagram of these sets depends only on the diagram of $X.$

\item The set of singular points of a semi-algebraic set $X\subset\mathbb R^n$ is semi-algebraic.

\item  A composition of semi-algebraic mappings is a semi-algebraic mapping.

\item  If $X$ and $Y\ne\emptyset$ are semi-algebraic sets, then the distance function $$\dist(\cdot, Y) \colon X \to {\mathbb R}, \quad x \mapsto \mathrm{dist}(x, Y) := \inf \{\|x - a\| : \ a \in Y \},$$
is continuous semi-algebraic.
\end{enumerate}

The following Curve Selection Lemma will be useful later \cite{Milnor1968,Nemethi1992}.

 \begin{lemma}\label{CSL}
Let $X\subset \mathbb{R}^n$ be a semi-algebraic set.
Assume that there exists a sequence $x^k\in X$ such that $\displaystyle\lim_{k \to \infty} x^k  = x\in\overline X\setminus X.$ 
Then there exists a $C^1$ semi-algebraic curve $\varphi \colon (0, \epsilon)\to X\setminus \{x\}$ such that $\displaystyle\lim_{t \to 0} \varphi(t) = x.$
 \end{lemma}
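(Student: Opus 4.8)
The plan is to combine the Tarski--Seidenberg principle with the classical Puiseux expansion of algebraic functions. After the semi-algebraic, distance-preserving translation $y\mapsto y-x$ we may assume $x=0$, so that $0\in\overline X\setminus X$; fix a quantifier-free formula $\Phi_X$ defining $X$, which exists by Theorem~\ref{TarskiSeidenberg2}.

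First I would record the first-order sentence encoding $0\in\overline X\setminus X$: since $0\in\overline X$ while $0\notin X$, for every real $\epsilon>0$ there is $y\in X$ with $0<\|y\|<\epsilon$ (any point of $X$ close enough to $0$ works, and it is automatically nonzero because $0\notin X$). Hence
$$
\forall\epsilon\,\Big(\epsilon>0\ \Longrightarrow\ \exists y\,\big(\Phi_X(y)\wedge 0<\|y\|^2<\epsilon^2\big)\Big)
$$
holds over $\mathbb R$, and therefore, by the transfer principle for real closed fields (a form of Tarski--Seidenberg), over the real closed field $R$ of Puiseux series $\sum_{k\geqslant k_0}a_k\,t^{k/N}$ with real coefficients, ordered so that $0<t<r$ for every real $r>0$. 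Instantiating $\epsilon$ at the infinitesimal $t$ produces a point $\xi=(\xi_1,\dots,\xi_n)\in R^n$ with $\Phi_X(\xi)$ and $0<\|\xi\|^2<t^2$.

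Next I would turn $\xi$ into an honest arc. Choosing a common ramification index $N$ and substituting $t=s^N$, each $\xi_i$ becomes an ordinary, convergent, real-analytic fractional power series $\xi_i(s)$ on some interval $(0,\epsilon_0)$. The estimate $0<\|\xi\|^2<t^2$ forces each $\xi_i$ to have positive order in $s$ (no cancellation occurs in $\sum\xi_i^2$, whose leading coefficients are positive), so $\varphi(s):=(\xi_1(s),\dots,\xi_n(s))\to 0$ as $s\to 0^+$, and, after shrinking $\epsilon_0$, $\varphi(s)\neq 0$ on $(0,\epsilon_0)$. The curve $\varphi$ is real-analytic (in particular $C^1$) and semi-algebraic, since its coordinates satisfy polynomial relations over $\mathbb R(s)$. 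Finally, $\varphi(s)\in X$ for all small $s>0$: the validity of $\Phi_X(\xi)$ over $R$ says precisely that the semi-algebraic set $\{s\in\mathbb R:\Phi_X(\varphi(s))\}$ contains all sufficiently small positive infinitesimals, hence contains an interval $(0,\epsilon)$. Undoing the translation yields a $C^1$ semi-algebraic curve $(0,\epsilon)\to X\setminus\{x\}$ converging to $x$, as required.

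I expect the only delicate point to be that last passage --- making the slogan ``a semi-algebraic property true at the generic infinitesimal parameter is true on a genuine one-sided interval near $0$'' precise. This is standard but must be handled with care; it comes down to the finiteness of the number of connected components of a semi-algebraic subset of $\mathbb R$, i.e.\ again to Theorem~\ref{TarskiSeidenberg2}. A more geometric alternative, bypassing Puiseux series, would stratify $X$ into finitely many connected semi-algebraic Nash submanifolds, reduce to the case where $X$ itself is one such manifold with $0\in\overline X\setminus X$, and induct on $\dim X$ by means of generic linear projections, descending to the one-dimensional case where a coordinate parametrizes a branch approaching $0$; there the work shifts to exhibiting a ``good'' projection direction.
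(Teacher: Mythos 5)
The paper does not prove this lemma; it simply cites Milnor and N\'emethi--Zaharia. Your argument is therefore to be judged against the known proofs, and what you give is essentially the standard textbook proof of the semi-algebraic Curve Selection Lemma (Bochnak--Coste--Roy, Theorem 2.5.5): express $0\in\overline X\setminus X$ as a first-order sentence with parameters in $\mathbb{R}$, transfer to a real closed extension containing a positive infinitesimal $t$, instantiate to get a Puiseux-series witness $\xi$ with $0<\|\xi\|^2<t^2$, and read off a real-analytic arc. This is correct in structure and is a genuine proof of the cited result.

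One point should be made explicit rather than left implicit. If $R$ is taken to be the full field of \emph{formal} Puiseux series, it is real closed and the transfer argument produces a witness $\xi\in R^n$, but such a $\xi$ need not be convergent, so the step ``substituting $t=s^N$, each $\xi_i$ becomes a convergent power series'' does not follow. The fix is standard: the existential sentence $\exists y\,(\Phi_X(y)\wedge 0<\|y\|^2<t^2)$ has its parameters in $\mathbb{R}(t)$, so by model completeness of the theory of real closed fields its truth in $R$ descends to the real closure of $\mathbb{R}(t)$ inside $R$, which is the field of \emph{algebraic} Puiseux series; a witness there has coordinates algebraic over $\mathbb{R}(t)$, hence convergent by the classical Puiseux theorem. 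Your opening reference to ``Puiseux expansion of algebraic functions'' suggests you had this in mind, but the body of the argument should say where algebraicity comes from. The remaining points -- positive order of each $\xi_i$ from $|\xi_i|<t$, non-vanishing of $\varphi$ from $\|\xi\|^2>0$, membership $\varphi(s)\in X$ for small $s>0$ by reading off the sign of the leading coefficient of each $P_j(\xi)$, and semi-algebraicity of $\varphi$ from the polynomial relations its coordinates satisfy over $\mathbb{R}[s]$ -- are all handled correctly, if briefly. Your alternative sketch via stratification and generic projections is closer in spirit to Milnor's original proof for real algebraic sets; either route establishes the lemma the paper invokes without proof.
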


Now we recall Hardt's semi-algebraic local trivialization theorem.
\begin{theorem}\label{HardtTheorem}
Let $X$ and $Y$ be respectively semi-algebraic sets in $\mathbb{R}^n$ and $\mathbb{R}^m$, $f \colon X \rightarrow Y$ a continuous semi-algebraic mapping.
Then there exists a partition of $Y$ into finitely many semi-algebraic subsets $Y_i, i = 1, \ldots p,$ such that $f$ is semi-algebraically trivial over each $Y_i,$ i.e., $f^{-1}(Y_i)$ is semi-algebraically homeomorphic to $f^{-1}(y_i)\times Y_i$ for each $i$ and any $y_i \in Y_i.$
\end{theorem}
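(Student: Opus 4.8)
The plan is to deduce the theorem from cylindrical algebraic decomposition (CAD), one of the standard structural tools of semialgebraic geometry recorded in the references above. After permuting coordinates I may assume the graph of $f$ sits in $\R^{m+n}$ with coordinates $(y,x)$, $y\in\R^m$, $x\in\R^n$, as $\Gamma_f=\{(y,x):x\in X,\ y=f(x)\}$; with $\pi\colon\R^{m+n}\to\R^m$ the projection to the first $m$ coordinates, $f$ then coincides with $\pi|_{\Gamma_f}$ under the semialgebraic identification $x\mapsto(f(x),x)$ of $X$ with $\Gamma_f$. I would choose a CAD $\mathcal D$ of $\R^{m+n}$ with the fibre variables $x_n,\dots,x_1$ eliminated first and adapted to $\Gamma_f$ and to $Y\times\R^n$; it induces a CAD $\mathcal D'$ of $\R^m$ over which $\mathcal D$ is cylindrical and of which $Y$ is a union of cells. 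The partition of $Y$ is then simply $\{Y_1,\dots,Y_p\}:=\{C\in\mathcal D':C\subset Y\}$; since the $f^{-1}(Y_i)$ partition $X$, everything reduces to proving triviality over a single $Y_i=C$.

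Fix also a point $c\in C$. By cylindricity, the cells of $\mathcal D$ lying over $C$ partition the cylinder $\pi^{-1}(C)=C\times\R^n$. The key step I would establish is a semialgebraic homeomorphism $h\colon\pi^{-1}(C)\to C\times\pi^{-1}(c)$ that commutes with the projections to $C$ and sends each cell $D$ of $\mathcal D$ over $C$ onto $C\times(D\cap\pi^{-1}(c))$. Granting it, I restrict $h$ to $\Gamma_f\cap\pi^{-1}(C)$: since $\Gamma_f$ is a union of cells of $\mathcal D$, this set goes onto $C\times(\Gamma_f\cap\pi^{-1}(c))$, and through the evident semialgebraic identifications $\Gamma_f\cap\pi^{-1}(C)\cong f^{-1}(C)$ and $\Gamma_f\cap\pi^{-1}(c)\cong f^{-1}(c)$ the map $h$ becomes a semialgebraic homeomorphism $f^{-1}(Y_i)\cong Y_i\times f^{-1}(y_i)$ taking $f$ to the projection onto $Y_i$ --- because $h$ respects the projection to $C$ and $f=\pi|_{\Gamma_f}$. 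That is exactly semialgebraic triviality over $Y_i$, and since the construction works for every $c\in C$ it yields the statement for an arbitrary $y_i\in Y_i$. (I note in passing that continuity of $f$ plays no role here, only that $\Gamma_f$ is semialgebraic.)

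The construction of $h$ over $C$ I would carry out by induction on the number $n$ of fibre variables: for $n=0$, $h=\mathrm{id}_C$; for the step, take the CAD $\mathcal D_{n-1}$ of $\R^{m+n-1}$ induced by eliminating $x_n$, whose cells over $C$ partition $C\times\R^{n-1}$ and for which the induction hypothesis supplies a homeomorphism $h_{n-1}$ with the required properties, and then extend it to $C\times\R^n$ by reparametrizing the last fibre coordinate fibrewise. Concretely, over a cell $E$ of $\mathcal D_{n-1}$ the cells of $\mathcal D$ are the graphs of continuous semialgebraic functions $\xi^E_1<\dots<\xi^E_{k_E}$ on $E$ and the bands between them, and I set $h(p,s)=(h_{n-1}(p),\lambda_E(p,s))$ with $\lambda_E(p,\cdot)$ the increasing piecewise-linear homeomorphism of $\R$ that matches the sheets $\xi^E_j(p)$ with the corresponding sheets over the slice $E\cap(\{c\}\times\R^{n-1})$ --- consistently with $h_{n-1}$ --- affine in between and a translation near $\pm\infty$. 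By design $h$ carries each cell of $\mathcal D$ over $C$ to its product with $C$. The hard part, and the only genuinely delicate point of the proof, is checking that these fibrewise reparametrizations glue to a homeomorphism of $C\times\R^n$: as $E$ degenerates onto a cell in its frontier, some of the sheets $\xi^E_j$ must coalesce or run off to $\pm\infty$, and the definition of $\lambda$ has to be compatible with that degeneration. I expect this coherence to be the main obstacle; it should follow from the frontier structure of the decomposition --- closures of cells are unions of cells, and the defining functions extend continuously to the frontier --- possibly after first refining $\mathcal D$ to a CAD with these regularity properties. With bicontinuity of $h$ in hand, semialgebraicity being clear, the theorem follows.
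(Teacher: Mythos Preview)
The paper does not prove this theorem: it is Hardt's semi-algebraic trivialization theorem, stated in Section~\ref{semi-algebraicGeometry} as a known result from the literature (see, e.g., Bochnak--Coste--Roy or Coste's lecture notes cited as \cite{Bochnak1998,Coste2000-1}) and used later without proof. So there is no ``paper's own proof'' to compare against.

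Your sketch follows the standard CAD-based strategy found in those references, and the overall architecture---replace $f$ by the projection from its graph, take a CAD adapted to $\Gamma_f$, partition $Y$ by the induced cells, and trivialize over each base cell by induction on the number of fibre variables via piecewise-linear reparametrization of the last coordinate---is correct. You have also located the genuine difficulty accurately: the reparametrizations $\lambda_E$ are defined separately over each cell $E$ of $\mathcal D_{n-1}$ lying above $C$, and one must show they assemble into a single continuous map on $C\times\R^n$. That is the heart of the proof, and your treatment of it (``I expect this coherence to follow from the frontier structure\dots possibly after first refining $\mathcal D$'') is not yet a proof. In the literature this step is handled either by working with a CAD satisfying a strong frontier condition and proving a separate lemma on continuous extension of the band functions to cell closures, or---as in van den Dries's o-minimal treatment---by a more elaborate induction that avoids the na\"{\i}ve gluing altogether. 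If you want a complete argument, this is the place where real work remains; the rest of your outline is sound, including your observation that continuity of $f$ is not actually used.
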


\subsection{Stratification and Whitney property of semi-algebraic sets}
Let $X\subseteq\R^n$ be a semi-algebraic set.

A {\em semi-algebraic stratification} of $X$ is a partition of $X$ into a locally finite family $\SA$ of connected semi-algebraic $C^1$-submanifolds of $\R^n$ such that the following {\em frontier condition} is satisfied: if $Y\cap (\overline Z\setminus Z)\ne\emptyset$ for $Y,Z\in\SA$, then $Y\subset (\overline Z\setminus Z)$ and $\dim Y<\dim Z.$

According to~\cite[Proposition 2.5.1]{Benedetti1990}, every semi-algebraic set admits a semi-algebraic stratification. 
Moreover, the number of strata and their diagrams depending only on the diagram of $X$ in view of~\cite[Proposition 4.4]{Yomdin2004}.

Let $X\subset\mathbb R^n$ be a semi-algebraic set and let $\mathcal{S} := \{X_\alpha\}_{\alpha \in I}$ be a semi-algebraic stratification of $X.$ The {\em dimension} of $X$ is defined by
$$\dim X:=\max\{\dim X_\alpha:\ \alpha\in I\}.$$
It is not hard to verify that this definition of dimension does not depend on the stratification of $X.$
For convenience, set $\dim\emptyset=-1$. 
Let $x\in X$, the {\em dimension of $X$ at $x$} is defined by
$$\dim_x X:=\max\{\dim X_\alpha:\ \alpha\in I,\ x\in\overline{X}_\alpha\}.$$ 
Obviously $\dim_x X=\dim T_xX$ if $x$ is a non singular point of $X$, where $T_xX$ denotes the tangent space of $X$ at $x$. 

% \begin{theorem}\label{Hardt}\cite{Hardt1980} Let $X\subset\R^n$ and $Y\subset\R^m$ be some semi-algebraic sets. Let $f:X\to Y$ be a semi-algebraic mapping. Then there is a semi-algebraic subset $Z$ of $Y$ of dimension at most $(\dim Y-1)$ such that, for each connected
% component $U$ of $Y-Z$, the restriction $f|_{f^{-1}(U)}\to U$ is a semi-algebraic fibration.
% \end{theorem}

We say that $X$ has the {\it Whitney property} if for any $a\in X$, there exists a neighborhood $U$ of $a$ and two constants $M>0$ and $\alpha>0$ such that any points $x$ and $y$ in $X\cap U$ can be joined in $X\cap U$ by a piecewise smooth curve of length $\leqslant M\|x - y\|^\alpha.$ In view of~\cite{Stasica1982}, if, in addition, $X$ is closed, then $X$ has the {\it Whitney property}. Although the constants $M$ and $\alpha$ depend on $U$, if $X$ is connected and compact, we can choose $U=X$ for any $a\in X$, which means $M$ and $\alpha$ depend only on $X$ (cf. \cite{Kurdyka1997}). In this case we say that $X$ has the {\it Whitney property with constant $M$ and exponent $\alpha$}. On the other hand, in light of~\cite{Kurdyka1991,Kurdyka1997}, for any constant $M>1$, there exists a semi-algebraic stratification $\SA$ of $X$ such that each stratum $Y\in\SA$ has the Whitney property with constant $M$ and exponent $1$. In the following result, which is crucial in the proof of Theorem~\ref{LocallyLipschitzHausdorff}, we strengthen this statement by claiming that it still holds on the closure of each stratum $Y\in\SA$.
%we prove that the closure $\overline Y$ of each stratum $Y\in\SA$ has the Whitney property with constant $M$ and exponent $1$.

\begin{proposition}\label{WhitneyProperty} 
Let $M>1$ and let $X\subseteq\R^n$ be a semi-algebraic set, then there exists a semi-algebraic stratification $\SA$ of $X$ such that for each stratum $Y\in\SA$, any two points $x,y\in \overline Y$ can be joined in $\overline Y$ by a piecewise smooth arc of length $\leqslant M\|x - y\|.$ In particular $\dist^g(x,y)\leqslant M\|x-y\|.$
\end{proposition}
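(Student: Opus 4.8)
The plan is to take for $\SA$ the stratification provided by Kurdyka's theorem~\cite{Kurdyka1991,Kurdyka1997}, applied with a constant $m$ fixed once and for all so that $1<m<M$; then every stratum $Y\in\SA$ already has the Whitney property with constant $m$ and exponent $1$ \emph{within} $Y$, and the whole issue is to show that passing to $\overline Y$ costs an arbitrarily small amount. I would isolate this cost in the following local estimate: for every $Y\in\SA$, every $z\in\partial Y=\overline Y\setminus Y$, and every $\eta>0$, $\rho>0$, there is a point $z'\in Y$ with $\|z'-z\|<\rho$ together with a piecewise smooth arc contained in $Y\cup\{z\}\subseteq\overline Y$ joining $z$ to $z'$ and of length at most $(1+\eta)\|z'-z\|$.

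To prove this estimate I would apply the Curve Selection Lemma (Lemma~\ref{CSL}) to the semi-algebraic set $Y$ at the point $z\in\overline Y\setminus Y$, obtaining a $C^1$ semi-algebraic curve $\varphi\colon(0,\epsilon)\to Y$ with $\varphi(s)\to z$. Expanding the coordinates of $\varphi$ in (convergent) Puiseux series and reparametrizing by $s=u^N$, with $N$ a common denominator of the exponents, one obtains $\psi(u):=\varphi(u^N)=z+c\,u^{b}+o(u^{b})$ with $c\neq 0$ and $b\in\mathbb Z_{\geqslant 1}$; hence $\|\psi(u)-z\|=u^{b}(\|c\|+O(u))$ and $\|\psi'(u)\|=u^{b-1}(b\|c\|+O(u))$, so $\mathrm{length}(\psi|_{(0,u]})=u^{b}(\|c\|+O(u))$ and therefore $\mathrm{length}(\psi|_{(0,u]})/\|\psi(u)-z\|\to 1$ as $u\to 0^{+}$. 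Choosing $u$ small enough that this ratio is $\leqslant 1+\eta$ and $\|\psi(u)-z\|<\rho$ and taking $z'=\psi(u)$ and the arc $\psi|_{[0,u]}$ (with $\psi(0):=z$), we get the estimate.

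Granting the estimate, I would finish as follows. Fix a stratum $Y$ and two distinct points $x,y\in\overline Y$, and fix once and for all an $\eta>0$, small and depending only on $m$ and $M$. If $x,y\in Y$, the Kurdyka arc inside $Y$ already has length $\leqslant m\|x-y\|<M\|x-y\|$. Otherwise, if $x\in\partial Y$ use the estimate with $\rho=\eta\|x-y\|$ to join $x$ to a point $x'\in Y$ by a short piecewise smooth arc, and set $x'=x$ if $x\in Y$; proceed symmetrically on the $y$ side; then join $x'$ to $y'$ inside $Y$ by a Kurdyka arc and concatenate the (at most three) pieces. Since $\|x'-y'\|\leqslant(1+2\eta)\|x-y\|$, the total length is at most $\bigl(2(1+\eta)\eta+m(1+2\eta)\bigr)\|x-y\|$, which is $\leqslant M\|x-y\|$ once $\eta$ is small, because $m<M$. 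Extracting an arc with the same endpoints from this piecewise smooth path does not increase the length, so we obtain the desired arc in $\overline Y$, and the bound on $\dist^g(x,y)$ is then immediate since this arc lies in $\overline Y$.

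The main obstacle is the local estimate: its geometric content is that a semi-algebraic curve approaching a point does so with arc length asymptotic to the chord length, which is exactly where semi-algebraicity is used, through the Puiseux expansion; the remainder is just bookkeeping with the triangle inequality and the constants. It is also why I would avoid the more naive route of taking Arzel\`a--Ascoli limits of the Kurdyka arcs joining sequences $x^{k}\to x$ and $y^{k}\to y$ in $Y$: such a limit is a priori only a Lipschitz arc in $\overline Y$, and turning it into a piecewise smooth one would require essentially the same local analysis anyway.
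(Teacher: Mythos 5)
Your proof is correct, and it reaches the conclusion by a genuinely different route from the paper's on the one step that matters. Both proofs start from the Kurdyka stratification with some constant $m$ (resp.\ $\frac{M+1}{2}$) strictly between $1$ and $M$, and both finish by the same concatenation and triangle-inequality bookkeeping after replacing boundary points $x,y\in\overline Y$ by nearby interior points $x',y'\in Y$. The difference is how the short connecting arcs inside $\overline Y$ from $x$ to $x'$ and from $y$ to $y'$ are produced. The paper invokes Stasica's theorem~\cite{Stasica1982}: the closed set $\overline Y$ has the Whitney property near $x$ with some unknown exponent $\alpha_1\leqslant 1$, and the unknown exponent is then compensated by choosing $x'$ so close to $x$ that $\|x-x'\|^{\alpha}$ is a small multiple of $\|x-y\|$. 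You instead apply the Curve Selection Lemma (Lemma~\ref{CSL}) at $x$ and read off from the Puiseux expansion of the selected curve that the arc-length-to-chord-length ratio tends to $1$ as one approaches $x$, which gives you exponent $1$ and constant $1+\eta$ directly and makes the final estimate more transparent. Your version has the advantage of being self-contained within the tools already recalled in the paper, while the paper's is more modular in deferring the local analysis to a cited result; neither is shorter in any essential way. Your closing remark is also apt: an Arzel\`a--Ascoli limit of Kurdyka arcs would only produce a Lipschitz path, so some local semi-algebraic input (Stasica, or curve selection plus Puiseux) is unavoidable to get piecewise smoothness up to the boundary.
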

\begin{proof} 
Let $\SA$ be a semi-algebraic stratification of $X$ such that each stratum in $\SA$ has the Whitney property with constant $\displaystyle\frac{M+1}{2}$ and exponent $1$ in view of~\cite{Kurdyka1991,Kurdyka1997}. 
Let $Y\in\SA$ and pick two points $x,y\in \overline Y$ arbitrarily. 
Clearly, we may suppose that $x\ne y$. In view of~\cite{Stasica1982}, there exists a neighborhood $U$ (resp., $V$) of $x$ (resp., $y$) and some positive constants $\alpha_1, M_1$ (resp., $\alpha_2, M_2$) such that $\overline Y\cap U$ (resp., $\overline Y\cap V$) has the Whitney property with constant $M_1$ (resp., $M_2$) and exponent $\alpha_1$ (resp., $\alpha_2$). Let 
$$\widetilde M:=\max\left\{M_1,M_2,\frac{M+1}{2}\right\}\ \text{ and }\ \alpha:=\min\{\alpha_1,\alpha_2,1\}>0.$$ 
Observe that we can pick two points $x'\in Y\cap U$ and $y'\in  Y\cap V$ arbitrarily close to $x$ and $y$ respectively so that 
$$\max\{\|x-x'\|,\|y-y'\|\}\leqslant 1 \ \text{ and }\ \max\{\|x-x'\|^\alpha,\|y-y'\|^\alpha\}\leqslant\frac{M-1}{8\widetilde M}\|x-y\|.$$ 
Then $x$ and $x'$ can be joined in $\overline Y\cap U$ by a piecewise smooth curve of length bounded by
$$M_1\|x - x'\|^{\alpha_1}\leqslant \widetilde M\|x - x'\|^{\alpha}\leqslant\frac{M-1}{8}\|x-y\|.$$
Similarly  $y$ and $y'$ can be also joined in $\overline Y\cap V$ by a piecewise smooth curve of length bounded by $\displaystyle\frac{M-1}{8}\|x-y\|.$
Moreover, by the construction, $x'$ and $y'$ can be joined in $Y$ by a piecewise smooth curve of length bounded by $\displaystyle\frac{M+1}{2}\|x' - y'\|.$ Summarily, $x$ and $y$ can be joined in $\overline Y$ by a piecewise smooth curve of length bounded by
\begin{eqnarray*} 
&&\frac{M-1}{4}\|x-y\|+\frac{M+1}{2}\|x' - y'\|\\
&\leqslant &\frac{M-1}{4}\|x-y\|+\frac{M+1}{2}\left(\|x - y\|+\|x-x'\|+\|y-y'\|\right)\\
&\leqslant &\frac{M-1}{4}\|x-y\|+\frac{M+1}{2}\|x - y\|+\widetilde M\left(\|x-x'\|^\alpha+\|y-y'\|^\alpha\right)\\
&\leqslant &\frac{M-1}{4}\|x-y\|+\frac{M+1}{2}\|x - y\|+\frac{M-1}{4}\|x-y\|=M\|x-y\|.
\end{eqnarray*}
The proposition is proved.
\end{proof}

Note that Proposition~\ref{WhitneyProperty} can be stated for a more general context of subanalytic sets like those given in~\cite{Kurdyka1991,Kurdyka1997,Stasica1982} but we only consider the semi-algebraic case which is totally enough for our purposes. 

\subsection{Some variational properties of semi-algebraic sets} 
In this subsection, we recall briefly some notions and results on variations of semi-algebraic sets which will be used afterwards. The contents presented in this part can be found in~\cite{Yomdin2004} and the references therein. 

\begin{definition}
Let $X\subset\R^n$ be a bounded set. For any $\epsilon > 0,$ denote by $M(\epsilon,X)$ the minimal number of closed balls of radius $\epsilon$ that cover $X$. 
The real number $\log_2 M(\epsilon,X)$ is called the {\em $\epsilon$-entropy} of $X$.
\end{definition}

Denote by $G_n^k$ and $\bar G_n^k$ the space of all the $k$-dimensional linear subspaces and the space of all the $k$-dimensional affine subspaces in $\R^n$ respectively. Each element $\bar P$ in $\bar G_n^{n-k}$ can be represented by a pair $(x,P)\in \R^n\times G_n^{n-k}$ where $x\in P$ and $\bar P = \bar P_x$ is the $k$-dimensional affine subspace of $\R^n$, orthogonal to $P$ at $x$. Let $d\bar P$ be the measure on $\bar G_n^{n-k}$ given by $d\bar P=dx\otimes dP$ where $dx$ is the Lebesgue measure on $P$ (identify $P$ with $\R^{n-k}$) and $dP$ is the measure on $G_n^k$ induced by the Haar measure on the orthogonal group $\mathcal O_n(\R)$ of $\mathbb{R}^n.$

\begin{definition}
Let $X$ be a bounded subset of $\R^n$. Define $V_0(X)$ as the number of connected components of $X$. For $i= 1,\ldots,n$, the {\em $i$-th variation} of $X$, denoted by $V_i(X)$, is defined as follows:
$$V_i(X) = c(n, i) \int_{\bar P\in \bar G_n^{n-i}}V_0(X\cap\bar P)d\bar P,$$
where the coefficient $c(n,i)$ is chosen in such a way that $V_i([0,1]^i) = 1.$
\end{definition}

\begin{proposition}[{see \cite[Proposition 5.8]{Yomdin2004}}]\label{5.8} 
Let $X\subset\overline\B_1^n\subset\R^n$ be a semi-algebraic set of dimension $l$, $Y\subset\R^n$ and $0 < \eta < 1$ such that $Y\subset \mathscr{N}_\eta(X)$. Assume that $X$ has the diagram $$\mathscr{D}(X)=\left(n,p,j_1,\ldots,j_p,(d_{ij})_{\substack{i=1,\ldots,p\\ j=1,\ldots,j_i}}\right).$$
Then we have
$$M(\eta,Y) \leqslant\left(\frac{4}{\eta}\right)^l C(n)\nu(l)\alpha(n),$$
where $C(n)$ is a positive constant depending only on $n$, 
$$\displaystyle\nu(l)=\sum_{i=0}^l c(n,i)\vol_i(\B_1^i)\ \text{ and }\ \displaystyle\alpha(n)=\frac{1}{2}\sum_{i=1}^p(d_i + 2)(d_i + 1)^{n-1}$$ 
with $d_i=\displaystyle\sum_{j=1}^{j_i}d_{ij}.$
\end{proposition}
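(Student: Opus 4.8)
This is \cite[Proposition~5.8]{Yomdin2004}; I describe the strategy. The plan is to reduce the covering problem for $Y$ to one for $X$, to invoke the Vitushkin--Yomdin inequality bounding the entropy $M(\epsilon,X)$ by the variations $V_i(X)$, to bound the $V_i(X)$ by the diagram of $X$ via a count of connected components of plane sections, and to assemble these. For the reduction, let $N=M(\eta,X)$ and let $a_1,\dots,a_N$ be the centres of an optimal covering of $X$ by closed $\eta$-balls. If $y\in Y\subseteq\mathscr{N}_\eta(X)$, then $\|y-x\|\leqslant\eta$ for some $x\in\overline X$ and $\|x-a_k\|\leqslant\eta$ for some $k$, so $\|y-a_k\|\leqslant 2\eta$; hence $Y\subseteq\bigcup_k\overline\B^n_{2\eta}(a_k)$, and since a closed ball of radius $2\eta$ is covered by at most a dimensional constant $\kappa(n)$ of closed $\eta$-balls, $M(\eta,Y)\leqslant\kappa(n)\,M(\eta,X)$.

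The main ingredient is the Vitushkin--Yomdin entropy inequality: for every bounded $X\subseteq\overline\B^n_1$ of dimension $l$ and every $0<\epsilon<1$,
$$M(\epsilon,X)\ \leqslant\ C'(n)\sum_{i=0}^{l}\frac{V_i(X)}{\epsilon^{i}},$$
with $C'(n)$ depending only on $n$. The underlying idea is an integral-geometric double count: from a maximal subset $\{x_1,\dots,x_m\}\subseteq X$ with pairwise distances $>\epsilon$ one has $M(\epsilon,X)\leqslant m$ and the balls $\B^n_{\epsilon/2}(x_j)$ are pairwise disjoint; on the top-dimensional part, $X$ contains inside each such ball an $l$-dimensional disc of radius $\sim\epsilon$, the $d\bar P$-measure of codimension-$l$ affine subspaces $\bar P$ meeting such a disc is $\gtrsim\epsilon^{l}$, while almost every such $\bar P$ meets $X$ in a finite set and hence meets at most $V_0(X\cap\bar P)$ of the disjoint balls; summing over $j$ and comparing with $\int_{\bar P}V_0(X\cap\bar P)\,d\bar P=V_l(X)/c(n,l)$ gives $m\lesssim\epsilon^{-l}V_l(X)$, the lower-dimensional pieces (treated by induction on dimension) producing the terms with $i<l$.

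Next I bound the variations by the diagram. Since $X\subseteq\overline\B^n_1$, only affine subspaces meeting $\overline\B^n_1$ contribute to $V_i(X)$, and those form a set of $d\bar P$-measure $\vol_i(\B^i_1)$. For any codimension-$i$ affine subspace $\bar P$, the section $X\cap\bar P$ is a semi-algebraic subset of $\bar P\cong\R^{n-i}$ cut out by the restrictions of the polynomials $f_{ij}$, of degrees $\leqslant d_{ij}$; applying the Thom--Milnor / Ole\u{\i}nik--Petrovski\u{\i} bound on the number of connected components to each basic piece $X_i\cap\bar P$ and summing,
$$V_0(X\cap\bar P)\ \leqslant\ \tfrac12\sum_{i=1}^{p}(d_i+2)(d_i+1)^{n-1}\ =\ \alpha(n),$$
the exponent $n-1$ dominating the genuine exponent $(n-i)-1$. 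Hence $V_i(X)\leqslant c(n,i)\,\vol_i(\B^i_1)\,\alpha(n)$, and, taking $\epsilon=\eta$ and using $\eta^{-i}\leqslant\eta^{-l}$ for $0\leqslant i\leqslant l$ (as $\eta<1$),
$$M(\eta,Y)\ \leqslant\ \kappa(n)C'(n)\,\alpha(n)\sum_{i=0}^{l}\eta^{-i}c(n,i)\vol_i(\B^i_1)\ \leqslant\ \Bigl(\frac{4}{\eta}\Bigr)^{l}C(n)\,\nu(l)\,\alpha(n),$$
with $C(n)=\kappa(n)C'(n)$; the crude factor $4^{l}$ leaves ample slack.

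The step I expect to be the main obstacle is the entropy--variations inequality: proving it with a constant depending only on $n$ requires care both in the integral-geometric transversality estimate --- a codimension-$l$ plane passing within $\epsilon/2$ of a smooth point of $X$ must cross $X$ along a set of planes of $d\bar P$-measure $\gtrsim\epsilon^{l}$ --- and in the bookkeeping of the induction over a semi-algebraic stratification with uniformly controlled local geometry, in the spirit of Proposition~\ref{WhitneyProperty}, so that no constant acquires a hidden dependence on the diagram beyond what is already carried by $\nu(l)$ and $\alpha(n)$.
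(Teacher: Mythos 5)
The paper offers no proof of this proposition: it is imported verbatim (with a pointer to Proposition~5.8 of Yomdin--Comte), so there is no in-paper argument for your sketch to be compared against. What can be said is whether your reconstruction of the source's argument is plausible and internally consistent, and it is. The three-step plan --- (a) pass from $M(\eta,Y)$ to $M(\eta,X)$ via the covering of $X$ by $\eta$-balls, inflation to $2\eta$-balls covering $\mathscr{N}_\eta(X)\supseteq Y$, and a dimensional doubling constant $\kappa(n)$; (b) the Vitushkin--Yomdin entropy/variation upper bound; (c) the bound $V_i(X)\leqslant c(n,i)\vol_i(\B^i_1)\,\alpha(n)$ obtained by restricting the integral to planes meeting $\overline\B^n_1$ and applying the Ole\u{\i}nik--Petrovski\u{\i}/Milnor--Thom connected-component bound to the sections --- is the standard route to an estimate of exactly this shape, and your bookkeeping (monotonicity $\eta^{-i}\leqslant\eta^{-l}$ for $\eta<1$, $\nu(l)=\sum_i c(n,i)\vol_i(\B^i_1)$, absorbing $\kappa(n)C'(n)$ into $C(n)$, treating $4^l$ as slack) assembles to the stated inequality.

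One place where you are asserting more than the paper itself records, and which you correctly flag as the delicate step: you invoke the entropy/variation upper bound with a constant $C'(n)$ depending only on $n$, whereas the version the paper quotes (its Proposition~\ref{5.14}) is stated with constants $c_1,c_2$ depending on the \emph{diagram} of $X$. For the proposed assembly to yield $C(n)$ depending only on $n$ (with all diagram dependence confined to $\alpha(n)$), you need the sharper, dimension-only form of the upper bound; that form does hold in Yomdin--Comte, but it is not something you can read off Proposition~\ref{5.14} as stated here, so the remark that ``no constant acquires a hidden dependence on the diagram'' is exactly the point that has to be checked against the source rather than against this paper. Two small cosmetic caveats: the claim that near each point of a maximal $\epsilon$-net the set $X$ contains an $l$-disc of radius $\sim\epsilon$ is only a heuristic (it fails near singular or lower-dimensional strata, which is why the induction over strata is needed), and in step~(c) you should say explicitly that the Haar measure on directions is normalised to be a probability measure so that the plane-measure really is $\vol_i(\B^i_1)$; both are consistent with what you wrote but worth making precise.
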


\begin{proposition}[{see \cite[Theorem 5.14]{Yomdin2004}}] \label{5.14} 
Let $X$ be a bounded semi-algebraic set of dimension $l.$ Then for any $\epsilon > 0,$ we have
$$c_1\sum_{i=0}^l V_i(X)\left(\frac{1}{\epsilon}\right)^i\leqslant M(\epsilon,X)\leqslant c_2\sum_{i=0}^l V_i(X)\left(\frac{1}{\epsilon}\right)^i,$$
where $c_1$ and $c_2$ are positive constants depending only on the diagram $\mathscr{D}(X)$ of $X.$
\end{proposition}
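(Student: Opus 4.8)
The plan is to normalize $X$ and then prove the two inequalities separately: the lower one is straightforward integral geometry, while the upper one will go by induction on $l=\dim X$ via hyperplane slicing. First I would reduce to $X\subseteq\overline\B_1^n$, since if $X\subseteq\overline\B_R^n$ then $M(\epsilon,X)=M(\epsilon/R,X/R)$ and $V_i(X)=R^i\,V_i(X/R)$, so both sides of the asserted chain are homogeneous of the same weight; moreover one may assume $\epsilon<\epsilon_0$, because for $\epsilon\geqslant\epsilon_0$ the left side is bounded by Proposition~\ref{5.8} while the right side is $\geqslant c\,V_0(X)\geqslant c$. Two ingredients recur. The first is a \emph{reproducing identity} for the variations, $\int_{\bar P\in\bar G_n^{n-1}}V_i(X\cap\bar P)\,d\bar P=\gamma(n,i)\,V_{i+1}(X)$, which follows from the defining formula for $V_i$ together with Fubini: intersecting first with a hyperplane and then with a codimension-$i$ affine plane is, for almost every pair, the same as intersecting with a codimension-$(i+1)$ affine plane, and the measures $d\bar P$ factor accordingly. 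The second is a \emph{uniform component bound}: the number of connected components of $X\cap\bar P\cap B$ is at most a constant $\beta=\beta(\mathscr D(X))$ for every affine plane $\bar P$ and every ball $B$, by the quantitative Tarski--Seidenberg estimates already used in Proposition~\ref{5.8}.

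For the lower bound, let $\{B_1,\dots,B_N\}$, $N=M(\epsilon,X)$, be an optimal cover of $X$ by closed $\epsilon$-balls, and fix $i\in\{0,\dots,l\}$ together with a plane $\bar P\in\bar G_n^{n-i}$. Since $X\cap\bar P=\bigcup_j(X\cap B_j\cap\bar P)$, and the number of connected components of a finite union never exceeds the sum of the numbers of components of its members, the uniform component bound gives $V_0(X\cap\bar P)\leqslant\beta\cdot\#\{j:B_j\cap\bar P\neq\emptyset\}$. Integrating over $\bar P$, and using that the $d\bar P$-measure of the codimension-$i$ affine planes meeting a fixed $\epsilon$-ball is $\leqslant C_i(n)\,\epsilon^i$ (freeze the direction of the plane; the admissible positions fill an $\epsilon$-ball in the $i$-dimensional normal space), one gets $V_i(X)\leqslant c(n,i)\,\beta\,C_i(n)\,\epsilon^i\,N$, hence $N\geqslant c(i,\mathscr D(X))\,V_i(X)\,\epsilon^{-i}$ for each $i$. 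Averaging these $l+1$ inequalities yields $M(\epsilon,X)\geqslant c_1\sum_{i=0}^l V_i(X)\,\epsilon^{-i}$.

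For the upper bound I would induct on $l$. The case $l=0$ is immediate, since then $X$ is finite and $M(\epsilon,X)\leqslant\#X=V_0(X)$; Proposition~\ref{5.8} also supplies the crude bound $M(\epsilon,X)\leqslant C(\mathscr D(X))\,\epsilon^{-l}$ needed above. For the inductive step, fix a unit vector $v$, an offset $s\in[0,\epsilon)$, and the hyperplanes $H_{v,t}:=\{x:\langle v,x\rangle=t\}$; only $O(1/\epsilon)$ of the levels $t=k\epsilon+s$, $k\in\mathbb Z$, meet $\overline\B_1^n$. For generic $v$, each slice $X_{v,t}:=X\cap H_{v,t}$ is a semi-algebraic subset of $H_{v,t}\cong\R^{n-1}$ of dimension $\leqslant l-1$ whose diagram is controlled by $\mathscr D(X)$ uniformly in $v$ and $t$, so the inductive hypothesis gives $M(\epsilon,X_{v,t})\leqslant A\sum_{i=0}^{l-1}V_i(X_{v,t})\,\epsilon^{-i}$ with $A=A(\mathscr D(X))$. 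Granting, for a suitable $v$, the comparison $M(3\epsilon,X)\lesssim\sum_k M(\epsilon,X_{v,k\epsilon+s})$, I would sum over $k$ and then average over $v\in\mathbb S^{n-1}$ and $s\in[0,\epsilon)$: by the reproducing identity the $(v,s)$-average of $\sum_k V_i(X_{v,k\epsilon+s})$ is a constant multiple of $\epsilon^{-1}\,V_{i+1}(X)$, so some admissible pair $(v,s)$ gives $\sum_k M(\epsilon,X_{v,k\epsilon+s})\leqslant c_2\sum_{i=1}^l V_i(X)\,\epsilon^{-i}\leqslant c_2\sum_{i=0}^l V_i(X)\,\epsilon^{-i}$; since $M(\epsilon,X)\leqslant C(n)\,M(3\epsilon,X)$, the induction closes.

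The main obstacle is exactly the comparison $M(3\epsilon,X)\lesssim\sum_k M(\epsilon,X_{v,k\epsilon+s})$, i.e.\ controlling $X$ inside the slabs $\{k\epsilon+s\leqslant\langle v,\cdot\rangle<(k+1)\epsilon+s\}$ lying between consecutive slice levels: orthogonally projecting a point of $X$ onto the nearest slice need not land on, or even near, $X\cap H_{v,t}$, so a purely metric argument cannot work and tameness must be invoked. The levels $t$ at which $\langle v,\cdot\rangle$ fails to restrict to a submersion on the strata of $X$ --- its critical values on the strata, together with the finitely many values at which the fiber type jumps --- form a finite set whose cardinality is bounded in terms of $\mathscr D(X)$, and over each of the complementary open intervals the family of slices $X_{v,t}$ is semi-algebraically trivial (Theorem~\ref{HardtTheorem}) with a rate of variation again governed by the diagram --- a quantitative Hardt-triviality statement in the spirit of Proposition~\ref{5.8}, valid (with uniform constants) for all $v$ outside a set one can keep away from in the averaging. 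This tame input, which plays no role in the purely integral-geometric lower bound, is precisely what makes $c_2$ depend on $\mathscr D(X)$; since $n$ --- and hence $l$ and the ranges of $i$ --- is part of the diagram, relabeling the constants gives the two-sided estimate with $c_1,c_2$ depending only on $\mathscr D(X)$.
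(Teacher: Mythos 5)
The paper does not prove this proposition: it is quoted verbatim from Yomdin--Comte (cited as~\cite[Theorem~5.14]{Yomdin2004}) and no argument is offered in the text, so there is no in-paper proof to compare your attempt against. Judged on its own, your lower-bound argument is correct and standard: a Milnor--Thom type bound gives a uniform constant $\beta(\mathscr D(X))$ for $V_0(X\cap B_j\cap\bar P)$, subadditivity of $V_0$ over a finite cover and the measure estimate $O(\epsilon^i)$ for codimension-$i$ affine planes meeting an $\epsilon$-ball give $V_i(X)\leqslant c\,\beta\,\epsilon^i M(\epsilon,X)$, and averaging over $i$ yields the left inequality.

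The upper-bound sketch has a genuine gap at exactly the point you flag. The comparison $M(3\epsilon,X)\lesssim\sum_k M(\epsilon,X_{v,k\epsilon+s})$ is not a consequence of a ``quantitative Hardt triviality'' with uniformly controlled rate: no such uniform Lipschitz trivialization exists, because the Lipschitz constant of the Hardt trivialization blows up as the level approaches a stratumwise critical value (take $X=\{y=x^2,\,0\leqslant x\leqslant 1\}$ and $v=(0,1)$: the slab $\{0\leqslant y\leqslant\epsilon\}$ needs $\sim\epsilon^{-1/2}$ balls while its bounding slices are two points). The estimate does hold \emph{globally} in such examples, but only because the excess concentrated near the critical slab is dominated by the total; establishing that cancellation for general $X$ and for a set of directions $v$ of positive measure is precisely the substance of the theorem, and the proposal does not carry it out. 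There is also a logical subtlety in the averaging step: you need the comparison to hold for a set of pairs $(v,s)$ of definite proportion so that the pigeonhole from the Crofton-type average of $\sum_k\sum_i V_i(X_{v,k\epsilon+s})\epsilon^{-i}$ lands on an admissible pair; asserting it ``for a suitable $v$'' and then averaging over all $(v,s)$ does not close that loop. In short, the plan identifies the right structure (normalization, Crofton reproducing identity, induction on dimension, isolating critical slabs), but the key lemma driving the induction is left unproved and cannot be obtained slab-by-slab from tameness alone.
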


\subsection{Semi-algebraic multivalued functions}

As we will consider the tangent cones at infinity and the sets of tangent directions at infinity of the fibers of semi-algebraic functions afterwards, when the value of the functions varies, so do the tangent cones at infinity and the sets of tangent directions at infinity, this means that we need to deal with semi-algebraic multivalued functions. Like semi-algebraic single-valued mappings, the definition of a semi-algebraic multivalued function is based on the semi-algebraic property of its graph.

\begin{definition}{\rm 
Let $X \subset \mathbb{R}^n$ and $Y \subset \mathbb{R}^p$ be semi-algebraic sets. A multivalued function $\mathcal{F} \colon X \rightrightarrows Y$ is said to be {\em semi-algebraic} if its graph
$$\{(x, y) \in X \times Y : \ y \in \mathcal{F}(x)\}$$
is a semi-algebraic subset of $\mathbb{R}^n\times\mathbb{R}^p.$
}\end{definition}

Let us next define the locally Lipschitz continuity of a multivalued function.
\begin{definition}
Let $\mathcal{F}\colon X \rightrightarrows Y$ be a multivalued function where $X \subset \mathbb{R}^n$ and $Y \subset \mathbb{R}^p.$ We say that $\mathcal{F}$ is {\em locally Lipschitz} at $x\in X$ if there exist some constants $c>0$ and $\delta>0$ such that for all $t_1,t_2\in\B_\delta^n(x)$, we have 
$$\mathcal{F}(t_1) \subset \mathcal{F}(t_2) + c|t_1 - t_2|\B^p.$$
\end{definition}

%---------------------------------------------------------------------------------------------------------------

\section{Tangent directions at infinity}\label{AGTangent}

Let $f \colon \mathbb{R}^n \rightarrow \mathbb{R}$ be a polynomial function of degree $d \ge 1$  with $n\geqslant 2.$ We can write
$$f(x) = f_d(x) + f_{d - 1}(x) + \cdots,$$ 
where $f_i$ is the homogeneous part of degree $i$ of $f.$ 

Put
\begin{equation}\label{AT}\mathbf{D}_\infty^a  := \left\{u\in\mathbb S^{n-1} : \ f_d(u) = 0 \right \}\end{equation}
and call it the {\em set of algebraic tangent directions at infinity} of fibers of $f.$ The following notion plays a crucial role in this paper.

\begin{definition}\label{AlgebraicTangent}{\rm 
For each $t \in \mathbb{R},$  the {\em set of geometric tangent directions at infinity} of the fiber $f^{-1}(t)$ is defined by
$$D_\infty(t) := \left\{u\in\mathbb S^{n-1} : \ \text{there is a sequence } x^k\in f^{-1}(t) \text{ such that } x^k\to\infty \text{ and } \frac{x^k}{\|x^k\|}\to u\right\}.$$
}\end{definition}

Some simple properties of tangent directions at infinity are given below.
\begin{lemma}\label{GeometricAlgebraic}
\begin{enumerate}[{\rm (i)}] 
\item $\mathbf{D}_\infty^a $ is an algebraic set of dimension at most $n - 2.$

\item  For all $t \in \R$, $D_\infty(t)$ is a semi-algebraic subset of $\mathbf{D}_\infty^a $ and it holds that
$$dim D_\infty(t) \le \dim f^{-1}(t) - 1 \le n - 2.$$

\item There is a representation of $D_\infty({t})$ such that the diagram of $D_\infty({t})$ depends only on the dimension $n$ and the degree of $f.$

\item For each $t\in\mathbb R$, let 
$$X_{t}:= \{x\in D_\infty({t})\setminus(\{\nabla f_d=0\}\cup \sing(D_\infty({t}))):\ \dim_x D_\infty(t)=n-2\}.$$
Then $X_t$ is semi-algebraic. Furthermore, there exists a representation of $X_t$ such that the diagram of $X_t$ depends only on the dimension $n$ and the degree of $f.$

\end{enumerate}
\end{lemma}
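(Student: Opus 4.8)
The plan is to treat the four items in order, using the Tarski--Seidenberg theorem (notably Theorem~\ref{TarskiSeidenberg2}) together with its effectivity clause (``the diagram depends only on the diagram of $X$'') as the workhorse, and a compactification argument for the dimension bound in (ii). For (i): $\mathbf{D}_\infty^a$ is the common zero set of the two polynomials $\|u\|^2-1$ and $f_d$, hence algebraic; and since $f$ has degree exactly $d\geqslant 1$, the form $f_d$ is not identically zero, so $Z:=\{f_d=0\}$ is a proper algebraic cone in $\R^n$: if $Z=\{0\}$ then $\mathbf{D}_\infty^a=\emptyset$, while otherwise $\dim Z\leqslant n-1$ and, $Z$ being a cone, $\dim(Z\cap\mathbb S^{n-1})=\dim Z-1\leqslant n-2$. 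In both cases $\dim\mathbf{D}_\infty^a\leqslant n-2$.

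For (ii), membership $u\in D_\infty(t)$ is described by the first-order formula
$$u\in\mathbb S^{n-1}\ \wedge\ \forall\epsilon>0\ \forall R>0\ \exists x\in\R^n\ \bigl(\|x\|^2>R\ \wedge\ f(x)=t\ \wedge\ \langle x,u\rangle>0\ \wedge\ \|x\|^2-\langle x,u\rangle^2<\epsilon\|x\|^2\bigr),$$
the last two conjuncts being a polynomial way of expressing that $x/\|x\|$ is close to the unit vector $u$; hence $D_\infty(t)$ is semi-algebraic by Theorem~\ref{TarskiSeidenberg2}, and reading $t$ as a free variable shows $\{(t,u):u\in D_\infty(t)\}$ is semi-algebraic as well. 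The inclusion $D_\infty(t)\subseteq\mathbf{D}_\infty^a$ follows from the homogeneous expansion of $f$: if $x^k=r_ku^k\in f^{-1}(t)$ with $r_k=\|x^k\|\to\infty$ and $u^k\to u$, then dividing $\sum_{i=0}^d r_k^i f_i(u^k)=t$ by $r_k^d$ and letting $k\to\infty$ yields $f_d(u)=0$. For the dimension bound, fix $t$, put $V:=f^{-1}(t)$, and consider the semi-algebraic homeomorphism $\phi\colon\R^n\to\mathbb B^n$, $\phi(x)=x/(1+\|x\|)$, onto the open unit ball, with the boundary sphere $\mathbb S^{n-1}$ playing the role of the directions at infinity. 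Since $V$ is closed in $\R^n$, the frontier $\overline{\phi(V)}\setminus\phi(V)$ inside $\overline{\mathbb B}^n$ lies entirely on $\mathbb S^{n-1}$ and is identified with $D_\infty(t)$; stratifying $\overline{\phi(V)}$ compatibly with $\phi(V)$ and invoking the frontier condition gives $\dim D_\infty(t)<\dim V$. As $f$ is non-constant, $V$ is a proper algebraic subset of $\R^n$, so $\dim V=\dim f^{-1}(t)\leqslant n-1$, whence $\dim D_\infty(t)\leqslant\dim f^{-1}(t)-1\leqslant n-2$ (the cases $V=\emptyset$ or $V$ bounded being trivial).

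For (iii) the point is effectivity: in the above formula $f$ enters only through $f(x)-t$, of degree $\leqslant d$ in $n+1$ variables, while the remaining atoms are fixed polynomials of bounded degree in a bounded number of variables; thus $\{(t,u):u\in D_\infty(t)\}$ is presented by quantified polynomial conditions whose diagram depends only on $(n,d)$, and eliminating the quantifiers via the projection and complementation steps (each carrying ``diagram depends only on diagram'') yields a quantifier-free representation with diagram depending only on $(n,d)$; substituting a value for the coordinate $t$ raises neither the number of polynomials nor their degrees, which gives the claim for $D_\infty(t)$. For (iv), $X_t$ is obtained from $D_\infty(t)$ by deleting the algebraic set $\{\nabla f_d=0\}$ (cut out by the $n$ partials of $f_d$, each of degree $d-1$), deleting $\sing(D_\infty(t))$ (semi-algebraic, with diagram controlled by that of $D_\infty(t)$), and imposing the semi-algebraic condition $\dim_x D_\infty(t)=n-2$ (again with diagram controlled by that of $D_\infty(t)$); since these are all Boolean operations and definable conditions whose effect on diagrams is controlled, $X_t$ is semi-algebraic with a representation whose diagram depends only on $(n,d)$.

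The step I expect to be the main obstacle is the dimension estimate in (ii): one must set up the compactification so that the frontier at infinity of $f^{-1}(t)$ is precisely $D_\infty(t)$ and so that semi-algebraicity is preserved throughout; an alternative is to argue directly via the Curve Selection Lemma~\ref{CSL} and Hardt's theorem~\ref{HardtTheorem}, but that route is more laborious. The remaining three items are routine once the effective Tarski--Seidenberg principle is granted.
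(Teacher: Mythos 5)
Your items (i)--(iii) are essentially sound. Item (i) is a direct count identical in spirit to the paper's ``clear.'' For (ii), the inclusion $D_\infty(t)\subseteq\mathbf D_\infty^a$ is proved exactly as in the paper, and your first-order formula is a correct definable description of $D_\infty(t)$; for the dimension bound you use the bounded compactification $\phi(x)=x/(1+\|x\|)$ and the frontier-dimension drop, whereas the paper works in $\R^{n+1}$ with the cone $A(t)=\{(\lambda x,\lambda):f(x)=t,\ \lambda>0\}$ and cites a frontier-dimension estimate directly. These are equivalent routes; the paper's cone has the small advantage that the cone over $D_\infty(t)$ appears verbatim as $\overline{A}(t)\cap(\R^n\times\{0\})$, which is then reused in (iii) and (iv). Your (iii) via quantifier elimination on the explicit formula is an acceptable alternative to the paper's argument via the diagram of $A(t)$.

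The genuine gap is in (iv). You assert, without proof or citation, that $\sing(D_\infty(t))$ and the set $\{x:\dim_x D_\infty(t)=n-2\}$ are semi-algebraic ``with diagram controlled by that of $D_\infty(t)$.'' The preliminaries of the paper only record that $\sing(X)$ is semi-algebraic; they do not claim a diagram bound for it, and such a bound is precisely what must be established (conditions involving ``$X$ is a $C^1$-manifold near $x$'' and ``$\dim_x X=k$'' are definable, but converting them to quantifier-free form with degree bounds tracked in terms of $\mathscr D(X)$ is nontrivial). The paper sidesteps this entirely by first proving the set-theoretic identity
\begin{equation*}
X_t \;=\; D_\infty(t)\setminus\bigl(\{\nabla f_d=0\}\cup\overline{\mathbf D_\infty^a\setminus D_\infty(t)}\bigr),
\end{equation*}
whose right-hand side involves only $D_\infty(t)$, $\mathbf D_\infty^a$, $\{\nabla f_d=0\}$ and a closure --- all with diagrams transparently controlled by $(n,d)$. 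Proving this identity is the real content of (iv): one inclusion uses that $\nabla f_d(u)\ne 0$ makes $u$ a regular point of $\mathbf D_\infty^a$ of local dimension $n-2$, and the other uses the Curve Selection Lemma (Lemma~\ref{CSL}) together with a local injectivity-of-projection argument to derive a contradiction if $u\in X_t$ lay in $\overline{\mathbf D_\infty^a\setminus D_\infty(t)}$. You should either prove the effectivity of $\sing$ and $\dim_x$ that you are invoking, or replace the ``Boolean operations'' paragraph by an argument establishing the identity above.
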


\begin{proof}
(i): This is clear. 

\noindent
(ii) and (iii): Take any $u \in D_\infty(t).$ By definition, there is a sequence $x^k\in f^{-1}(t)$ such that $x^k\to\infty$ and $\displaystyle\frac{x^k}{\|x^k\|}\to u.$ 
Observe that
$$f_d\left(\frac{x^k}{\|x^k\|}\right) = \frac{f_d(x^k)}{\|x^k\|^d} = \frac{f(x^k) - \sum_{i=0}^{d-1}f_i(x^k)}{\|x^k\|^d} = \frac{t}{\|x^k\|^d} - \sum_{i=0}^{d-1}\frac{1}{\|x^k\|^{d-i}}f_i\left(\frac{x^k}{\|x^k\|}\right).$$
Letting $k \to \infty,$ we get $f_d(u)=0,$ and so $u \in \mathbf{D}_\infty^a .$ Therefore $D_\infty(t) \subseteq \mathbf{D}_\infty^a .$ 

For each $t \in \mathbb{R},$ define
\begin{equation}\label{At}A(t) :=\{(\lambda x, \lambda) \in \mathbb{R}^{n} \times (0,+\infty)  : \ f(x) = t\}.\end{equation}
Clearly, $A(t)$ is a semi-algebraic set, which is homeomorphic to $f^{-1}(t) \times (0,+\infty).$ Hence
$$\dim A(t) = \dim f^{-1}(t) + 1.$$
Note that 
$$\overline{A}(t) \cap (\mathbb{R}^{n} \times \{0\}) \subset \overline{A}(t) \setminus A(t).$$ 
By \cite[Proposition~1.4]{HaHV2017}, therefore
$$\dim \overline{A}(t) \cap (\mathbb{R}^{n} \times \{0\}) \leqslant \dim \left (\overline{A}(t) \setminus A(t) \right) < \dim A(t) = \dim f^{-1}(t) + 1.$$
On the other hand, it is clear that $\overline{A}(t) \cap (\mathbb{R}^{n} \times \{0\})$ is the cone with the apex at the origin and the base being $D_\infty(t)\times\{0\}.$ Therefore, $\dim D_\infty(t) \leqslant f^{-1}(t) - 1.$

%whose diagram depends only on $n$ and the degree of $f.$
Finally, observe that the diagram of $A(t)$ depends only on the dimension $n$ and the degree of $f,$ so does 
$\overline{A}(t) \cap (\mathbb{R}^{n} \times \{0\}).$ Consequently, the diagram of $D_\infty(t)$ depends also only 
on the dimension $n$ and the degree of $f.$ 

(iv) The first statement is clear so let us prove the second one. First of all, we show that 
\begin{equation}\label{Xt1}X_t= D_\infty({t})\setminus(\{\nabla f_d=0\}\cup \overline{\mathbf{D}_\infty^a\setminus D_\infty(t)}).\end{equation}

Pick arbitrarily $u\in D_\infty({t})\setminus(\{\nabla f_d=0\}\cup \overline{\mathbf{D}_\infty^a\setminus D_\infty(t)}).$ 
%If $\dim_u D_\infty(t)<n-2,$  then clearly $u\in X_t.$
%Suppose that  $\dim_u D_\infty(t)=n-2.$ 
Then there is $r>0$ such that $\mathbb B^n_r(u)\cap \mathbf{D}_\infty^a\setminus D_\infty(t)=\emptyset,$ i.e., 
\begin{equation}\label{BD}\mathbb B^n_r(u)\cap \mathbf{D}_\infty^a=\mathbb B^n_r(u)\cap D_\infty(t).\end{equation}
As $u\notin \{\nabla f_d=0\}$, it is not a singular point of $\mathbf{D}_\infty^a$ and $\dim_u \mathbf{D}_\infty^a=n-2.$
By combining these with~\eqref{BD}, it follows that $u\notin \sing(D_\infty(t))$ and $\dim_u D_\infty(t)=n-2.$
Consequently, $u\in X_t$ and so
$$X_t\supseteq D_\infty({t})\setminus(\{\nabla f_d=0\}\cup \overline{\mathbf{D}_\infty^a\setminus D_\infty(t)}).$$

Now for any $u\in X_t$, we will show that $u\not\in \overline{\mathbf{D}_\infty^a\setminus D_\infty(t)}),$ which implies 
$$X_t\subseteq D_\infty({t})\setminus(\{\nabla f_d=0\}\cup \overline{\mathbf{D}_\infty^a\setminus D_\infty(t)}),$$
and so yields~\eqref{Xt1}. Assume for contradiction that $u\in \overline{\mathbf{D}_\infty^a\setminus D_\infty(t)}).$ 
Then by Lemma~\ref{CSL}, there is a $C^1$ semi-algebraic curve 
$$\varphi \colon (0, \epsilon)\to \mathbf{D}_\infty^a\setminus D_\infty(t)$$ 
such that $\displaystyle\lim_{t \to 0} \varphi(t) = u.$
On the other hand, since $u\not\in\sing(D_\infty(t)),$ for any sufficiently small neighborhood $U$ of $u$, the restriction $\pi|_{U\cap D_\infty({t})}$ is one-to-one, where 
$$\pi\colon\mathbb R^n\to \{u\}+T_u D_\infty({t})$$ 
is the orthogonal projection on the tangent plane $\{u\}+T_u D_\infty({t}).$ 
It is clear that $\pi|_{U\cap \mathbf D_\infty^a}$ is not one-to-one.
Hence $u\in \sing(\mathbf{D}_\infty^a),$ i.e., $\nabla f_d(u)=0$, which is a contradiction. 
Consequently~\eqref{Xt1} follows. 
Observe that $$\widetilde\pi(\overline{A}(t) \cap (\mathbb{S}^{n-1} \times \{0\}))=D_\infty(t),$$ 
where $\widetilde\pi\colon\mathbb R^{n+1}\to\mathbb R^n$ is the projection on the first $n$ coordinates and $A(t)$ is given by~\eqref{At}. 
Hence, in view of~\eqref{Xt1}, we have
$$X_t= \widetilde\pi(\overline{A}(t) \cap (\mathbb{S}^{n-1} \times \{0\}))\setminus(\{\nabla f_d=0\}\cup \overline{\mathbf{D}_\infty^a\setminus \widetilde\pi(\overline{A}(t) \cap (\mathbb{S}^{n-1} \times \{0\}))}).$$
As the diagrams of $A(t),\ \{\nabla f_d=0\}$ and $\mathbf{D}_\infty^a$ depend only on the dimension $n$ and the degree of $f,$ so does the diagram of $X_t$. Thus item (iv) follows.
\end{proof}

% It is clear that for each $t\in\R,$ the diagram of 
% $$f^{-1}(t) =\{x\in\R^n:\ f(x)-t=0\}$$ 
% does not depend on $t$ since it is given by $\mathcal{D}(t) = (n,1,1,\deg f).$  So the following proposition follows easily from the definition.

We next provide some examples and remarks concerning tangent directions at infinity.

\begin{example}
Consider the polynomial function 
$$f \colon \mathbb{R}^3 \to \mathbb{R}, \quad (x, y, z) \mapsto f(x,y,z)=z-x^2-y^2.$$
Some simple computations show that $K_\infty(f) = \emptyset,$ $\mathbf{D}_\infty^a  = \{(0, 0, \pm 1)\},$ and that
\begin{eqnarray*}
 D_\infty(t) \ = \  \{(0, 0, 1)\} \quad  \textrm{  for all  } \quad t \in \mathbb{R};
\end{eqnarray*}
in particular, $\dim D_\infty(t) = 0 < 2 = \dim f^{-1}(t).$
\end{example}

The following example shows that in general, we can not expect that the multivalued function 
$${D}_\infty \colon \mathbb{R} \rightrightarrows \mathbb{S}^{n - 1}, \quad t \mapsto D_\infty(t),$$ 
is constant over each connected component of $\mathbb{R} \setminus K_\infty(f).$ 

\begin{example}
Consider the following polynomial (see \cite{Parusinski1997} and \cite[Example 2.1]{Kurdyka2000-1})
$$f(x,y,z) = x + x^2y + x^4yz.$$ 
We have $K_\infty(f) = \{0\}$ and $\mathbf{D}_\infty^a =\{(x,y,z)\in\mathbb S^2:\ xyz=0\}.$ Let us prove that the multivalued function
$$t\mapsto H_t:=D_\infty(t)\cap \{x=0\}=D_\infty(t)\cap \{(0,y,z):\ y^2+z^2=1\}$$ 
is not constant on the intervals $(-\infty, 0)$ and $(0, +\infty).$ To do this, for any sequence $(x_k,y_k,z_k)\in f^{-1}(t)$ tending to infinity, we need to investigate the cluster points of the sequence $\displaystyle\frac{(x_k,y_k,z_k)}{\|(x_k,y_k,z_k)\|}$ belonging to $\{(0,y,z):\ y^2+z^2=1\}$.
Without loss of generality, assume that the sequence $\displaystyle\frac{(x_k,y_k,z_k)}{\|(x_k,y_k,z_k)\|}$ converges to a limit $v=(0,y,z)\in\{(0,y,z):\ y^2+z^2=1\}$. 

Let $\displaystyle(x_k,y_k,z_k)=\left(t,\pm k,-\frac{1}{t^2}\right)\in f^{-1}(t)$, then $$\frac{(x_k,y_k,z_k)}{\|(x_k,y_k,z_k)\|}\to (0,\pm 1,0)\in H_t.$$ 
Furthermore, if
$(x_k,y_k,z_k)=(t,0,\pm k)\in f^{-1}(t)$, then $$\frac{(x_k,y_k,z_k)}{\|(x_k,y_k,z_k)\|}\to (0,0,\pm 1)\in H_t.$$

Now assume that $y_k \simeq z_k$ as $k\to+\infty.$ So set $z_k=\lambda_k y_k$. 
Consider the equation
\begin{equation}\label{fk}
\lambda_k x_k^4y_k^2+x_k^2y_k+x_k-t=0
\end{equation}
with $y_k$ as variable. Observe that $y_k,z_k\to\infty,$ $y,z\ne 0$ and $\displaystyle\lambda_k\to\frac{z}{y}\ne 0$.
These, together with~\eqref{fk} and the fact that $\displaystyle\frac{(x_k,y_k,z_k)}{\|(x_k,y_k,z_k)\|}\to v=(0,y,z)$, imply that $x_k\to 0.$ 

The discriminant of~\eqref{fk} is
$$\Delta_k=x_k^4-4\lambda x_k^4(x_k-t)=x_k^4(1+4\lambda_k t- 4\lambda_k x_k).$$ 
So~\eqref{fk} has real roots if and only if $1+4\lambda_k (t-x_k)\geqslant 0.$ Letting $k\to+\infty$, we have
\begin{equation}\label{Delta>0}
1+4\frac{z}{y} t\geqslant 0.
\end{equation} 
Consider two cases:

\subsubsection*{Case $t>0$}
The condition \eqref{Delta>0} becomes $\displaystyle\frac{z}{y}\geqslant-\frac{1}{4t}.$ Set $A:=\displaystyle\frac{(0,-1,\frac{1}{4t})}{\sqrt{1+\frac{1}{16t^2}}}.$
Then, on the unit circle centered at the origin in the plane $Oyz$, $H_t$ is the union of two anticlockwise arcs (see Figure 1)
$$\overgroup{A,(0,0,-1)}\quad \text{ and }\quad  \overgroup{-A,(0,0,1)}.$$

\subsubsection*{Case $t < 0$} 
The condition \eqref{Delta>0} becomes $\displaystyle\frac{z}{y}\leqslant-\frac{1}{4t}.$ Set $B:=\displaystyle\frac{(0,1,\frac{1}{4t})}{\sqrt{1+\frac{1}{16t^2}}}.$ 
Hence, on the unit circle centered at the origin in the plane $Oyz$, $H_t$ is the union of two anticlockwise arcs  (see Figure 2) 
$$\overgroup{(0,0,-1),B}\quad \text{ and }\quad   \overgroup{(0,0,1),-B}.$$

\begin{tikzpicture}
\def\d{3};\def\k{0.5};\def\r{2.5};\def\m{2};

\coordinate (O1) at (-\d-\k,0);
\coordinate (v) at ($\r/sqrt(2)*(-1,1)$);

\draw [->,line width=1pt](-2*\d-\k,0) -- (-\k,0);
\draw [->,line width=1pt](-\d-\k,-\d) -- (-\d-\k,\d);
\draw [domain=-2*\d: -1.5*\k,line width=1pt,variable=\t] plot ({\t},{-\t-\d-\k});
\draw [blue,line width=1pt] ($(O1)+(v)$) arc (135:270:\r);
\draw [blue,line width=1pt] ($(O1)-(v)$) arc (-45:90:\r);

\node [below] at (-\d-\k,-\d-2*\k)  {Figure 1: $t>0$.};
\node [right] at ($(O1)+(0,\k/\m)$) {$Ox$};
\node [above] at (-\k,0) {$y$};
\node [right] at (-\d-\k,\d) {$z$};
\node [left]  at ($(O1)+(v)$) {$A$};
\node [right] at ($(O1)-(v)$) {$-A$};
\node [below] at (-2*\k,-\d+\k) {$\displaystyle z=-\frac{y}{4t}$};

\filldraw (-\d-\k,0) circle (2pt);
\filldraw ($(O1)+(v)$) circle (2pt);
\filldraw ($(O1)-(v)$) circle (2pt);
\filldraw (-\d-\k,-\r) circle (2pt) node[right] {$(0,0,-1)$};
\filldraw (-\d-\k,\r) circle (2pt) node[left] {$(0,0,1)$};

\coordinate (O2) at (\d+\k,0);
\coordinate (w) at ($\r/sqrt(2)*(1,1)$);

\draw [->,line width=1pt](\k,0) -- (2*\d+\k,0);
\draw [->,line width=1pt](\d+\k,-\d) -- (\d+\k,\d);
\draw [domain=2*\k: 2*\d+0.5*\k,line width=1pt,variable=\t] plot ({\t},{\t-\d-\k});
\draw [blue,line width=1pt] ($(O2)+(0,\r)$) arc (90:225:\r);
\draw [blue,line width=1pt] ($(O2)-(0,\r)$) arc (-90:45:\r);

\node [below] at (\d+\k,-\d-2*\k)  {Figure 2: $t<0$.};
\node [right] at ($(O2)-(0,\k/\m)$) {$Ox$};
\node [above] at (2*\d+\k,0) {$y$};
\node [right] at (\d+\k,\d) {$z$};
\node [right] at ($(O2)+(w)$) {$B$};
\node [left ] at ($(O2)-(w)$) {$-B$};
\node [above] at (2*\d,\d-\k) {$\displaystyle z=-\frac{y}{4t}$};

\filldraw (\d+\k,0) circle (2pt);
\filldraw ($(O2)+(w)$) circle (2pt);
\filldraw ($(O2)-(w)$) circle (2pt);
\filldraw (\d+\k,-\r) circle (2pt) node[left] {$(0,0,-1)$};
\filldraw (\d+\k,\r)  circle (2pt) node[right] {$(0,0,1)$};
\end{tikzpicture}

On $\mathbb R\setminus\{0\}$, it is clear that the mapping $t\mapsto H_t$ is not constant whence neither is the mapping $t\mapsto D_\infty(t).$
\end{example}

The following example shows that, in general, the multivalued function 
$${D}_\infty \colon \mathbb{R} \rightrightarrows \mathbb{S}^{n - 1}, \quad t \mapsto D_\infty(t),$$ 
is not locally Lipschitz continuous.

\begin{example}
Consider the polynomial function (see~\cite{Dinh2013})
$$f \colon \mathbb{R}^3 \to \mathbb{R}, \quad (x, y, z) \mapsto f(x,y,z) = z(x^2+(xy-1)^2).$$ 
Some simple computations show that $D_\infty(0) = \{z=0,\ x^2+y^2=1\}$ and
% Moreover, consider the sequence $Y^k := (\frac{1}{k},k,tk^2)$ with $t>0$. Obviously $Y^k\in f^{-1}(t)$ and $\lim_{k\to+\infty}\frac{Y^k}{\|Y^k\|}=(0,0,1)$ so $(0,0,1)\in D_\infty(t)$. Therefore \eqref{LocallyLipschitzE} does not hold for $t_2=0$ and any $t_1>0$. Note that, here, the reader may check that 
$$D_\infty(t) = 
\begin{cases}
D_\infty(0)\cup\{x=0,y^2+z^2=1,\ z\geqslant 0\} & \text{ if } t>0,\\
D_\infty(0)\cup\{x=0,y^2+z^2=1,\ z\leqslant 0\} & \text{ if } t<0.
\end{cases}$$
Therefore, the multivalued function ${D}_\infty$ is not Lipschitz continuous around the value $t = 0.$ Observe that $0 \in K_\infty(f).$ (To see this, consider the sequence
%$$\nabla f(x,y,z)=(z(2x+2y(xy-1),2xz(xy-1),x^2+(xy-1)^2)).$$
$\displaystyle X^k := \left(\frac{1}{k},k,\frac{1}{k}\right)$ which tends to infinity as $k$ tends to infinity. Then it is easy to check that 
$$f(X^k) \to 0 \quad \textrm{ and } \quad \|X^k\| \|\nabla f(X^k)\| \to 0$$
as $k \to +\infty.$) On the other hand, in Corollary~\ref{LocallyLipschitz} below, it will be shown that the multivalued function ${D}_\infty$ is locally Lipschitz outside $K_\infty(f).$
\end{example}

%-----------------------------------------------------------------------------------------------------------------------------------------

\section{Continuity of the set of tangent directions at infinity and its volume}\label{Main}

Given a polynomial function $f \colon \mathbb{R}^n \rightarrow \mathbb{R},$ we would like to study the variation of the set of tangent directions at infinity of the fibers of $f$ and its volume while relating them to the set of asymptotic critical values of $f$. First of all, we need some preparation.

\begin{lemma}\label{length} 
Let $f \colon \mathbb{R}^n \rightarrow \mathbb{R}$ be a polynomial function and let $\gamma\colon [t_1,t_2]\to\mathbb R^n$ be an integral curve of the vector field $\displaystyle\frac{\nabla f}{\|\nabla f\|^2}$. Assume that there exists a constant $C>0$ such that 
$$\|\gamma(t)\|\|\nabla f(\gamma(t))\|\geqslant C \quad \textrm{ for all } \quad t \in [t_1, t_2].$$
Then 
$$\left\|\frac{\gamma(t_1)}{\|\gamma(t_1)\|}-\frac{\gamma(t_2)}{\|\gamma(t_2)\|}\right\| \leqslant \frac{2}{C} |t_1-t_2|.$$
\end{lemma}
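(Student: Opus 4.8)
The plan is to estimate the arc length of $\gamma$ on $[t_1,t_2]$ and compare the displacement of the normalized curve $u(t) := \gamma(t)/\|\gamma(t)\|$ to that arc length divided by $\|\gamma\|$. First I would compute $\gamma'(t)$: since $\gamma$ is an integral curve of $\nabla f/\|\nabla f\|^2$, we have $\gamma'(t) = \dfrac{\nabla f(\gamma(t))}{\|\nabla f(\gamma(t))\|^2}$, hence $\|\gamma'(t)\| = \dfrac{1}{\|\nabla f(\gamma(t))\|}$. (One should note that $f(\gamma(t)) = t$ along the curve, which is why this is the natural parametrization; but we won't even need that — only the norm of $\gamma'$ matters.) Combined with the hypothesis $\|\gamma(t)\|\,\|\nabla f(\gamma(t))\| \geqslant C$, this gives the pointwise bound $\|\gamma'(t)\| \leqslant \dfrac{\|\gamma(t)\|}{C}$.

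Next I would differentiate the normalized curve. Writing $u(t) = \gamma(t)/\|\gamma(t)\|$, a direct computation gives
$$
u'(t) = \frac{\gamma'(t)}{\|\gamma(t)\|} - \frac{\langle \gamma(t),\gamma'(t)\rangle}{\|\gamma(t)\|^3}\,\gamma(t),
$$
which is the component of $\gamma'(t)/\|\gamma(t)\|$ orthogonal to $\gamma(t)$. Therefore $\|u'(t)\| \leqslant \dfrac{\|\gamma'(t)\|}{\|\gamma(t)\|} \leqslant \dfrac{1}{C}$ by the previous step. Wait — that only gives a factor $1/C$, but the claimed bound has $2/C$. Let me keep the cruder estimate $\|u'(t)\| \leqslant \dfrac{2\|\gamma'(t)\|}{\|\gamma(t)\|}$ obtained by bounding each of the two terms in $u'(t)$ separately (each has norm $\leqslant \|\gamma'(t)\|/\|\gamma(t)\|$, using $|\langle\gamma,\gamma'\rangle| \leqslant \|\gamma\|\|\gamma'\|$), so that $\|u'(t)\| \leqslant \dfrac{2}{C}$; this is the robust route that matches the stated constant and avoids worrying about the sign of $t_1 - t_2$ or the orientation of the parametrization.

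Finally I would integrate:
$$
\left\| u(t_1) - u(t_2) \right\| = \left\| \int_{t_2}^{t_1} u'(s)\, ds \right\| \leqslant \int_{\min(t_1,t_2)}^{\max(t_1,t_2)} \|u'(s)\|\, ds \leqslant \frac{2}{C}\,|t_1 - t_2|,
$$
which is exactly the assertion. The only mild subtlety — and the closest thing to an obstacle — is making sure $\gamma$ stays away from the origin so that $u(t)$ is well-defined and differentiable on all of $[t_1,t_2]$; but this is immediate from the hypothesis, since $\|\gamma(t)\|\,\|\nabla f(\gamma(t))\| \geqslant C > 0$ forces $\gamma(t) \neq 0$ (and $\nabla f(\gamma(t)) \neq 0$, which is also implicitly needed for the vector field to be defined along $\gamma$). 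Everything else is a one-line differentiation and a triangle inequality under the integral, so there is no real difficulty here — the lemma is a clean quantitative packaging of Malgrange-type control.
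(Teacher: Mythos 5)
Your proof is correct and follows essentially the same route as the paper's: differentiate the normalized curve $\alpha(t) = \gamma(t)/\|\gamma(t)\|$, bound $\|\alpha'(t)\|$ by $2/C$ via the triangle inequality applied to the quotient rule, and integrate. Your side observation is also right and worth noting: since $u'(t)$ is precisely the component of $\gamma'(t)/\|\gamma(t)\|$ orthogonal to $\gamma(t)$, one in fact has $\|u'(t)\| \leqslant \|\gamma'(t)\|/\|\gamma(t)\| \leqslant 1/C$, so the lemma holds with constant $1/C$; the paper's $2/C$ is not tight, though the extra factor of $2$ is harmless for the applications.
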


\begin{proof} 
For each $t\in[t_1,t_2]$, let $\displaystyle\alpha(t):=\frac{\gamma(t)}{\|\gamma(t)\|}.$ We have
$$\begin{array}{lllll}
\|\alpha'(t)\|&=&\displaystyle\left\|\frac{(\gamma)'(t)\|\gamma(t)\|-\gamma(t)\|\gamma(t)\|'}{\|\gamma(t)\|^2}\right\|\\
&\leqslant&\displaystyle\frac{\|(\gamma)'(t)\|+\left\|\|\gamma(t)\|'\right\|}{\|\gamma(t)\|}\\
&\leqslant& \displaystyle 2\frac{\|(\gamma)'(t)\|}{\|\gamma(t)\|}=\displaystyle \frac{2}{\|\nabla(\gamma(t))\|\|\gamma(t)\|}\leqslant \frac{2}{C}.
\end{array}$$
Therefore
$$\left\|\frac{\gamma(t_1)}{\|\gamma(t_1)\|}-\frac{\gamma(t_2)}{\|\gamma(t_2)\|}\right\|=\|\alpha(t_1)-\alpha(t_2)\|\leqslant\int_{t_1}^{t_2}\|\alpha'(t)\|dt\leqslant \frac{2}{C} |t_1-t_2|.$$
\end{proof}

\begin{lemma}\label{NotFar} 
Let $f \colon \mathbb{R}^n \rightarrow \mathbb{R}$ be a polynomial function and let $I=(a,b)$ be an interval in $\mathbb R.$ Assume that there exist some constants $C > 0$ and $R > 0$ with $\displaystyle e^\frac{b-a}{C}<\frac{3}{2}$ such that 
\begin{equation}\label{C}
\|x\|\|\nabla f(x)\|\geqslant C \quad \text{ for  } \quad \|x\| \geqslant R \ \textrm{ and }  \ f(x)\in I.
\end{equation}
Let $t_1\in I$. Suppose that $x^k$ is a sequence in $f^{-1}({t_1}) \setminus\B^n_{2R}$ such that $x^k\to\infty$ and $\displaystyle\frac{x^k}{\|x^k\|}\to u.$ 
For each $k,$ let $\gamma^k(t)$ be the maximal integral curve of the vector field $\displaystyle\frac{\nabla f}{\|\nabla f\|^2}$ with $\gamma^k(t_1)=x^k.$ Then for any $t_2\in (t_1,b)$, the following statements hold:
\begin{enumerate}[{\rm (i)}] 
\item The trajectory $\gamma^k(t)$ reaches the fiber $f^{-1}({t_2})$ at the time $t_2$.
\item The sequence $\gamma^k(t_2)$ tends to infinity as $k\to+\infty$.
\item For any cluster point $v$ of the sequence $\displaystyle\frac{\gamma^k(t_2)}{\|\gamma^k(t_2)\|}$, we have 
$$\|u-v\|\leqslant \frac{2}{C}|t_1-t_2|.$$
\end{enumerate}
\end{lemma}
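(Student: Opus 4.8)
The plan is to integrate the vector field $\frac{\nabla f}{\|\nabla f\|^2}$ starting from $x^k$ and control how far the trajectory can travel, using the lower bound~\eqref{C} in two ways: to guarantee that the trajectory does not hit a zero of $\nabla f$ (so it is defined for all $t$ up to $b$), and to keep it outside the ball $\B^n_R$. For item (i), observe that along an integral curve $\gamma^k$ of $\frac{\nabla f}{\|\nabla f\|^2}$ one has $\frac{d}{dt}f(\gamma^k(t)) = \langle \nabla f(\gamma^k(t)), \frac{\nabla f(\gamma^k(t))}{\|\nabla f(\gamma^k(t))\|^2}\rangle = 1$, so $f(\gamma^k(t)) = t$ as long as the curve is defined; in particular the curve reaches $f^{-1}(t_2)$ precisely at time $t_2$. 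What must be checked is that the maximal interval of definition of $\gamma^k$ contains $[t_1,t_2]$. Here I would estimate the speed: $\|(\gamma^k)'(t)\| = \frac{1}{\|\nabla f(\gamma^k(t))\|}$, so by~\eqref{C}, as long as $\|\gamma^k(t)\|\ge R$, we get $\|(\gamma^k)'(t)\| = \frac{\|\gamma^k(t)\|}{\|\gamma^k(t)\|\|\nabla f(\gamma^k(t))\|} \le \frac{\|\gamma^k(t)\|}{C}$. Hence $\left|\frac{d}{dt}\log\|\gamma^k(t)\|\right| \le \frac{\|(\gamma^k)'(t)\|}{\|\gamma^k(t)\|} \le \frac{1}{C}$, which by Gr\"onwall gives $\|\gamma^k(t)\| \ge \|x^k\| e^{-|t-t_1|/C} \ge \|x^k\| e^{-(b-a)/C} > \frac{2}{3}\|x^k\| \ge \frac{4}{3}R$ for all $t\in[t_1,t_2]$, since $\|x^k\|\ge 2R$ and $e^{(b-a)/C}<\frac{3}{2}$. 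Thus the trajectory stays in the region $\{\|x\|\ge R,\ f(x)\in I\}$ where the field is smooth and its norm is bounded below by $\frac{C}{\|x\|}$, hence cannot blow up or terminate before time $t_2$; this proves (i). The same inequality $\|\gamma^k(t_2)\| \ge \frac{2}{3}\|x^k\| \to +\infty$ proves (ii).

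For item (iii), the key point is that throughout $[t_1,t_2]$ we have $\|\gamma^k(t)\|\|\nabla f(\gamma^k(t))\|\ge C$ by~\eqref{C} (the trajectory stays in the relevant region, as just shown), so Lemma~\ref{length} applies to $\gamma^k$ restricted to $[t_1,t_2]$ and yields
\[
\left\|\frac{\gamma^k(t_1)}{\|\gamma^k(t_1)\|}-\frac{\gamma^k(t_2)}{\|\gamma^k(t_2)\|}\right\| \le \frac{2}{C}|t_1-t_2|.
\]
Since $\frac{\gamma^k(t_1)}{\|\gamma^k(t_1)\|} = \frac{x^k}{\|x^k\|}\to u$, passing to a subsequence along which $\frac{\gamma^k(t_2)}{\|\gamma^k(t_2)\|}\to v$ and taking the limit in the displayed inequality gives $\|u-v\|\le\frac{2}{C}|t_1-t_2|$, which is (iii).

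The main obstacle is the completeness argument in (i): one must be careful that the integral curve of $\frac{\nabla f}{\|\nabla f\|^2}$ is a priori only defined on a maximal open interval, and rule out both finite-time escape to infinity and the trajectory running into $\{\nabla f = 0\}$ (where the vector field is undefined). Both are handled by the Gr\"onwall estimate above: the bound $\frac{2}{3}\|x^k\| \le \|\gamma^k(t)\| \le \frac{3}{2}\|x^k\|$ valid on any subinterval of $[t_1,t_2]$ on which the curve exists confines the trajectory to a compact annulus within $\{\|x\|\ge R, f(x)\in I\}$, on which $\frac{\nabla f}{\|\nabla f\|^2}$ is smooth with bounded norm, so a standard continuation argument extends the curve to all of $[t_1,t_2]$. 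A small technical point to state carefully is that the constraint $f(x)\in I$ is automatically maintained since $f(\gamma^k(t))=t\in(t_1,b)\subset I$, and $\|\gamma^k(t)\|\ge\frac{4}{3}R>R$ throughout, so~\eqref{C} is indeed in force along the whole trajectory.
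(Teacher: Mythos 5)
Your proposal is correct and follows essentially the same route as the paper: bound the growth of $\|\gamma^k(t)\|$ via the gradient estimate~\eqref{C} and a Gr\"onwall-type argument to show the trajectory stays in the region $\{\|x\|\geqslant R,\ f(x)\in I\}$ up to time $t_2$, then invoke Lemma~\ref{length} for item (iii). The only cosmetic difference is that you get the two-sided bound $\|x^k\|e^{-|t-t_1|/C}\leqslant\|\gamma^k(t)\|\leqslant\|x^k\|e^{|t-t_1|/C}$ at once from the logarithmic derivative, whereas the paper first obtains the upper bound by Gr\"onwall and then feeds it back into a separate integral estimate to get the lower bound $\|\gamma^k(t)\|\geqslant\|\gamma^k(t_1)\|(2-e^{(t-t_1)/C})$; both yield what is needed.
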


\begin{proof}
(i) For each $k,$ set
$$T_k=\sup\{t:\ t_1<t\leqslant b \text{ and } \|\gamma^k(s)\|\geqslant R \text{ for all } s\in[t_1,t]\}.$$
For all $t \in [t_1,T_k)$ we have $f(\gamma^k(t)) = t$ and 
$$\|\gamma^k(t)\|'\leqslant\|(\gamma^k)'(t)\|=\frac{1}{\|\nabla f (\gamma^k(t))\|}\leqslant\frac{\|\gamma^k(t)\|}{C}.$$ 
By applying Gr\"onwall's Lemma, it is not hard to see that (see also \cite[Theorem 3.5]{Acunto2005-1}) 
\begin{equation}\label{Pt1}
\|\gamma^k(t)\| \leqslant  \|\gamma^k(t_1)\|\exp\left(\int_{t_1}^t\frac{ds}{C}\right)=\|\gamma^k(t_1)\|e^\frac{t-t_1}{C}.
\end{equation}
On the other side, we have 
$$\gamma^k(t)-\gamma^k(t_1)=\int_{t_1}^t(\gamma^k)'(s)ds=\int_{t_1}^t\frac{\nabla f(\gamma^k(s))}{\|\nabla f(\gamma^k(s))\|^2}ds.$$
Thus
\begin{eqnarray}
\|\gamma^k(t)\| & \geqslant & \|\gamma^k(t_1)\|-\displaystyle\int_{t_1}^t\frac{ds}{\|\nabla f(\gamma^k(s))\|} \nonumber \\
& \geqslant & \|\gamma^k(t_1)\|-\displaystyle\int_{t_1}^t\frac{\|\gamma^k(s)\|}{C}ds \nonumber \\
& \geqslant & \|\gamma^k(t_1)\|-\displaystyle\int_{t_1}^t\frac{\|\gamma^k(t_1)\|}{C}e^\frac{s-t_1}{C}ds \nonumber \\
& = & \|\gamma^k(t_1)\|\left(1-\displaystyle\int_{t_1}^t d e^\frac{s-t_1}{C}\right) \nonumber \\
& = & \|\gamma^k(t_1)\| \left(1-e^\frac{s-t_1}{C}\Big|_{t_1}^{t}\right)=\|\gamma^k(t_1)\|(2-e^\frac{t-t_1}{C}) \label{tt1},
\end{eqnarray}
where the third inequality follows from~\eqref{Pt1}. Assume that $T_k < b.$ Then
$$\|\gamma^k(T_k)\|\geqslant \|\gamma^k(t_1)\|(2-e^\frac{T_k-t_1}{C})> \|\gamma^k(t_1)\|(2-e^\frac{b-a}{C})>\frac{\|\gamma^k(t_1)\|}{2} \geqslant R.$$
By continuity, it follows that $\gamma^k(T_k + \delta) \geqslant R$ for all $\delta  > 0$ small enough, which contradicts the definition of $T_k.$ Therefore  $T_k = b.$

As $\nabla f(\gamma^k(t))\ne 0$ for $t\in[t_1,T_k)=[t_1,b)\supset [t_1,t_2]$, the trajectory $\gamma^k(t)$ can not reach a stationary point before reaching the fiber $f^{-1}({t_2}).$ Moreover, since $t_1 < t_2 < b,$ it follows from \eqref{Pt1} that $\gamma^k(t)$ can not go to infinity as $t$ tends to $t_2$. Therefore, it must reach the fiber $f^{-1}({t_2})$ at the time $t_2.$

(ii) We have $\gamma^k(t_2)\in  f^{-1}({t_2})$ by item~(i). Moreover, it follows from \eqref{tt1} that
$$\|\gamma^k(t_2)\|\geqslant\|\gamma^k(t_1)\|(2-e^\frac{t_1-t_2}{C})\geqslant\|\gamma^k(t_1)\|(2-e^\frac{b-a}{C})>\frac{\|\gamma^k(t_1)\|}{2} = \frac{\|x^k\|}{2}.$$
So $\gamma^k(t_2)\to \infty$ as $k\to+\infty$. 

(iii) We know that for all $ t \in [t_1, t_2],$
\begin{equation*}
\|\gamma^k(t)\| \geqslant R \quad \text{ and } \quad f(\gamma^k(t)) = t.
\end{equation*}
Then Lemma~\ref{length}, together with~\eqref{C}, yields
$$\left\|\frac{x^k}{\|x^k\|}-\frac{\gamma^k(t_2)}{\|\gamma^k(t_2)\|}\right\|\leqslant \frac{2}{C} |t_1-t_2|.$$
Letting $k \to \infty$, we get 
$$\|u - v\| \leqslant \frac{2}{C} |t_1 - t_2|$$
for any cluster point $v$ of the sequence  $\displaystyle\frac{\gamma^k(t_2)}{\|\gamma^k(t_2)\|}.$
\end{proof}

The first main result of this paper reads as follows.

\begin{theorem}\label{LocallyLipschitzHausdorff} 
Let $f \colon \mathbb{R}^n \rightarrow \mathbb{R}$ be a polynomial function and let $t_0\not\in K_\infty(f).$ Then there exist some constants $c>0$ and $\delta>0$ such that for all $t_1,t_2\in(t_0-\delta,t_0+\delta)$, we have 
$$\dist^{g}_{H}(D_\infty({t_1}),D_\infty({t_2}))\leqslant c|t_1-t_2|,$$
where $\dist_{H}^g(\cdot, \cdot)$ denotes the Hausdorff distance with respect to the intrinsic metric in $\mathbf{D}_\infty^a.$
\end{theorem}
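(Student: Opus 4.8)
Since $t_0 \notin K_\infty(f)$, by definition there exist constants $C>0$ and $R>0$ such that $\|x\|\,\|\nabla f(x)\| \geqslant C$ whenever $\|x\| \geqslant R$ and $f(x)$ lies in some interval $(t_0-\delta', t_0+\delta')$; shrinking $\delta'$ I may assume the extra hypothesis $e^{2\delta'/C} < \tfrac{3}{2}$ needed to invoke Lemma~\ref{NotFar}. Fix $\delta = \delta'/2$ and work with $t_1, t_2 \in (t_0-\delta, t_0+\delta)$. The Hausdorff estimate $\dist^g_H(D_\infty(t_1), D_\infty(t_2)) \leqslant c|t_1-t_2|$ is symmetric in $t_1,t_2$, so it suffices to show that every $u \in D_\infty(t_1)$ lies within intrinsic distance $c|t_1-t_2|$ (inside $\mathbf{D}_\infty^a$) of some point of $D_\infty(t_2)$, and vice versa; I will treat the case $t_2 > t_1$, the case $t_2 < t_1$ being identical after reversing the flow.

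Take $u \in D_\infty(t_1)$, so there is a sequence $x^k \in f^{-1}(t_1)$ with $x^k \to \infty$ and $x^k/\|x^k\| \to u$; discarding finitely many terms I may assume $x^k \notin \B^n_{2R}$. Apply Lemma~\ref{NotFar} to the flow of $\nabla f/\|\nabla f\|^2$ starting at $x^k$: the trajectories $\gamma^k$ reach $f^{-1}(t_2)$ at time $t_2$, $\gamma^k(t_2) \to \infty$, and for any cluster point $v$ of $\gamma^k(t_2)/\|\gamma^k(t_2)\|$ we have $\|u - v\| \leqslant \tfrac{2}{C}|t_1-t_2|$. Such a cluster point $v$ exists by compactness of $\mathbb{S}^{n-1}$, and $v \in D_\infty(t_2)$ since $\gamma^k(t_2) \in f^{-1}(t_2)$ tends to infinity. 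This gives a point $v \in D_\infty(t_2)$ with Euclidean distance $\|u-v\| \leqslant \tfrac{2}{C}|t_1-t_2|$; the remaining — and main — task is to upgrade the Euclidean bound to an intrinsic (geodesic) bound inside $\mathbf{D}_\infty^a$.

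For the upgrade I will use Proposition~\ref{WhitneyProperty}. Fix $M = 2$ (any $M>1$ works) and let $\SA$ be the corresponding semi-algebraic stratification of $\mathbf{D}_\infty^a$; by Lemma~\ref{GeometricAlgebraic}(i) this is a set of dimension $\leqslant n-2$, and its stratification has finitely many strata $Y_1,\dots,Y_N$. The difficulty is that $u$ and $v$ may lie in the closures of different strata, so a single application of the proposition does not suffice. To handle this I will choose $\delta$ (and $c$) depending on the geometry of the stratification: by compactness and the frontier condition, there is a constant $\rho > 0$ such that for any two strata $Y_i, Y_j$ whose closures are disjoint, $\dist(\overline{Y_i}, \overline{Y_j}) \geqslant \rho$. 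Then I shrink $\delta$ further so that $\tfrac{2}{C}\cdot 2\delta < \rho$; consequently, if $u \in \overline{Y_i}$ and $v \in \overline{Y_j}$ with $\|u-v\| \leqslant \tfrac{2}{C}|t_1-t_2| < \rho$, the closures $\overline{Y_i}$ and $\overline{Y_j}$ must meet. Picking a point $w \in \overline{Y_i} \cap \overline{Y_j}$ and applying Proposition~\ref{WhitneyProperty} inside $\overline{Y_i}$ (to join $u$ and $w$) and inside $\overline{Y_j}$ (to join $w$ and $v$) would give a piecewise smooth path in $\mathbf{D}_\infty^a$ — but $w$ need not be close to $u$ or $v$. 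This is the genuine obstacle, and the clean way around it is the following: I instead choose, once and for all, a refinement of the stratification fine enough that, together with a Łojasiewicz-type inequality applied to the finitely many semi-algebraic functions $x \mapsto \dist(x, \overline{Y_j})$ on $\overline{Y_i}$, one gets a constant $L \geqslant 1$ with the property that any $u, v \in \mathbf{D}_\infty^a$ with $\|u-v\|$ small admit a piecewise smooth connecting arc inside $\mathbf{D}_\infty^a$ of length $\leqslant L\|u-v\|$. (That $\mathbf{D}_\infty^a$ itself enjoys such a local intrinsic-versus-Euclidean Lipschitz comparison is exactly the content of Proposition~\ref{WhitneyProperty} applied stratum-by-stratum combined with the closed Whitney property of \cite{Stasica1982} for the whole closed set $\mathbf{D}_\infty^a$; since $\mathbf{D}_\infty^a$ is closed and, being a bounded algebraic set, may be assumed connected after treating connected components separately, \cite{Kurdyka1997} furnishes a uniform constant.) With such an $L$ in hand, $\dist^g(u,v) \leqslant L\|u-v\| \leqslant \tfrac{2L}{C}|t_1-t_2|$.

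Finally I assemble the two directions. Setting $c := \tfrac{2L}{C}$, the argument above shows $D_\infty(t_1) \subset \mathscr{N}^g_{c|t_1-t_2|}(D_\infty(t_2))$ in the intrinsic metric, and by symmetry (running the flow from $f^{-1}(t_2)$ backward) $D_\infty(t_2) \subset \mathscr{N}^g_{c|t_1-t_2|}(D_\infty(t_1))$; hence $\dist^g_H(D_\infty(t_1), D_\infty(t_2)) \leqslant c|t_1-t_2|$, as claimed. The main obstacle, as indicated, is the passage from the Euclidean Hausdorff estimate (which falls out directly from Lemmas~\ref{length} and \ref{NotFar}) to the intrinsic one; this is where Proposition~\ref{WhitneyProperty} is indispensable and where one must be careful that the Whitney/Łojasiewicz comparison constant can be taken uniform on all of $\mathbf{D}_\infty^a$ and not merely on each stratum.
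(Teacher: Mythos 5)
Your proof correctly isolates the two parts of the argument: the Euclidean bound $\|u-v\|\leqslant \tfrac{2}{C}|t_1-t_2|$ via Lemma~\ref{NotFar}, and the upgrade from Euclidean to intrinsic distance in $\mathbf{D}_\infty^a$. Your first half matches the paper. However, the ``clean way around'' that you propose for the second half has a genuine gap.

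The issue is the Hölder exponent. You invoke the Whitney property of the whole compact closed set $\mathbf{D}_\infty^a$ from~\cite{Stasica1982} and~\cite{Kurdyka1997} to get a ``uniform constant $L$'' with $\dist^g(u,v)\leqslant L\|u-v\|$. But the Whitney property for a closed semi-algebraic set $X$ gives only $\dist^g(x,y)\leqslant M\|x-y\|^\alpha$ with some exponent $\alpha\in(0,1]$ depending on $X$; there is no guarantee that $\alpha=1$. The exponent is genuinely $<1$ whenever $\mathbf{D}_\infty^a$ has singularities such as cusps (e.g., for the plane cusp $y^2=x^3$ the geodesic distance across the cusp point behaves like the $2/3$-power of the Euclidean distance), and the algebraic set $\mathbf{D}_\infty^a=\{u\in\mathbb{S}^{n-1}:f_d(u)=0\}$ can certainly be singular. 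With $\alpha<1$ your final estimate is only $\dist^g(u,v)\leqslant M\bigl(\tfrac{2}{C}|t_1-t_2|\bigr)^\alpha$, which proves Hölder continuity but not the Lipschitz bound the theorem asserts. The appeal to a Łojasiewicz inequality for $x\mapsto\dist(x,\overline{Y_j})$ has the same problem: Łojasiewicz exponents are also generally $<1$. What \emph{is} true, and what the paper's Proposition~\ref{WhitneyProperty} makes precise, is that one can choose the stratification $\SA$ so that each stratum closure $\overline Y$ has the Whitney property with exponent exactly $1$ (and constant $M$ arbitrarily close to $1$); the exponent-$1$ estimate simply does not pass to the union of strata.

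This is precisely why the paper's proof does something more delicate than comparing the endpoints $u$ and $v$: it follows the entire normalized trajectory $s\mapsto\gamma^k(s)/\|\gamma^k(s)\|$ for $s\in[t_1,t_2]$, and builds a finite chain of intermediate points $x(s_0)=u, x(s_1),\ldots,x(s_p)=v$ in $\mathbf{D}_\infty^a$ with $s_0<s_1<\cdots<s_p$, chosen so that each pair $x(s_i),x(s_{i+1})$ lies in the closure of a \emph{single} stratum (Claims~\ref{Boundary} and~\ref{Unisolated} guarantee the chain does not get stuck and terminates because there are finitely many strata), and $\|x(s_i)-x(s_{i+1})\|\leqslant c|s_i-s_{i+1}|$ by Lemma~\ref{NotFar}. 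The exponent-$1$ Whitney bound is then applied stratum by stratum, and the lengths telescope because $\sum_i|s_i-s_{i+1}|=|t_1-t_2|$. You flag the chain idea in your ``$w\in\overline{Y_i}\cap\overline{Y_j}$'' attempt but abandon it because ``$w$ need not be close to $u$ or $v$''; the paper's insight is that the gradient flow itself supplies intermediate points which \emph{are} close, with closeness controlled by the time spent in each stratum, so that the bound becomes Lipschitz after summing.
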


\begin{proof} 
Since $t_0\not\in K_\infty(f),$ there exist some constants $C>0, R > 0$ and $\delta > 0$ such that 
$$\|x\|\|\nabla f(x)\|\geqslant C$$
for all $x \in \mathbb{R}^n$ with $\|x\|\geqslant R$ and $|f(x) - t_0| < \delta.$ By shrinking $\delta$ if necessary, we can assume that $e^\frac{2\delta}{C} < \displaystyle\frac{3}{2}$ and $\displaystyle\delta < \frac{R'}{2c},$
where $\displaystyle c := \frac{2}{C}$ and 
$$R' := \min\{\dist(Z,Z'):\ Z\ne Z', \  Z \text{ and } Z' \text{ are connected components of } \mathbf{D}_\infty^a \}.$$ 
(If $\mathbf{D}_\infty^a $ is connected, we let $R' := +\infty$.)

Denote by $\dist^g(\cdot, \cdot)$ the intrinsic metric in $\mathbf{D}_\infty^a.$ Let $t_1,t_2\in(t_0-\delta,t_0+\delta)$ with $t_1 < t_2.$
To prove the theorem, it is enough to show that 
$$\dist^g(u, D_\infty({t_2}))\leqslant c|t_1-t_2| \quad \textrm{ for all } \quad u \in D_\infty({t_1}).$$
To this end, fix any $u \in D_\infty({t_1}).$ By definition, there is a sequence $x^k \in f^{-1}({t_1})$ such that $x^k\to\infty$ and $\displaystyle\frac{x^k}{\|x^k\|}\to u.$ 
For each $k,$ let $\gamma^k(\cdot)$ be the maximal integral curve of the vector field $\displaystyle\frac{\nabla f}{\|\nabla f\|^2}$ with $\gamma^k(t_1) = x^k.$ 
Passing to a subsequence if necessary, we can suppose that the sequence  $\displaystyle\frac{\gamma^k(t_2)}{\|\gamma^k(t_2)\|}$  converges to a vector $v.$ By Lemma~\ref{NotFar}, we have $v \in D_\infty({t_2})$ and 
\begin{equation}\label{u-v}
\|u - v \| \leqslant c|t_1 - t_2| \leqslant 2c \delta < R'.
\end{equation}
Consequently, $u$ and $v$ lie in a same connected component of $\mathbf{D}_\infty^a.$

Take arbitrarily $M > 1.$ By Proposition~\ref{WhitneyProperty}, there exists a finite semi-algebraic stratification $\SA$ of $\mathbf{D}_\infty^a$ such that 
\begin{equation}\label{Whitney}
\dist^g(x,y)\leqslant M\|x - y\|, 
\end{equation}
for any stratum $Y\in\SA$ and any two points $x,y\in \overline Y$. 

Assume that there exists a finite sequence of points in $\mathbf{D}_\infty^a$: 
\begin{equation}\label{SQ}
x(s_0) := u , x(s_1), \ldots, x(s_p) := v, 
\end{equation}
with $t_1 =: s_0 \leqslant s_1 < \cdots < s_p := t_2$, such that for $i=0,\ldots,p-1$, the following two properties hold:
\begin{itemize}
\item $x(s_i)$ and $x(s_{i+1})$ lie in the closure of a same stratum of $\SA,$ and
\item $\| x(s_i) - x(s_{i+1})\|  \leqslant c|s_i-s_{i+1}|.$
\end{itemize}
Then, by the inequality~\eqref{Whitney}, we get
\begin{eqnarray*}
\dist^g(x(s_i),x(s_{i+1})) &\leqslant & M\| x(s_i) - x(s_{i+1})\|  \ \leqslant \  c M |s_i-s_{i+1}|,
\end{eqnarray*}
which yields 
\begin{equation}\label{M2}
\dist^g(u,v)\leqslant\sum_{i=0}^{p-1}\dist^g(x(s_i),x(s_{i+1}))\leqslant\sum_{i=0}^{p-1}cM|s_i-s_{i+1}|=cM|t_1-t_2|. 
\end{equation}
Therefore 
$$\dist^g(u,D_\infty({t_1}))\leqslant\dist^g(u,v)\leqslant cM|t_1-t_2|.$$ 
As the inequalities hold for any $M > 1,$ they still holds for $M = 1.$  Hence it remains to construct a sequence with the required properties.

Since $u \in \mathbf{D}_\infty^a,$ there exists a stratum $Y_1 \in \SA$ such that $u \in \overline Y_1.$ If $v \in \overline Y_1,$ there is nothing prove, so assume that $v \not \in \overline Y_1.$

Let $s_0 := t_1, x(s_0) := u,$ and
$$s_1 := \sup\left\{s \in [s_0, t_2]:\ \text{ the sequence } \frac{\gamma^k(s)}{\|\gamma^k(s)\|} \text{ has a cluster point in } \overline Y_1\right\}.$$
The following claim is a key of the proof since it allows to determine the second point of the desired sequence.

\begin{claim}\label{Boundary}
There exists a cluster point of the sequence $\displaystyle\frac{\gamma^k(s_1)}{\|\gamma^k(s_1)\|}$ in $\overline Y_1.$
\end{claim}

\begin{proof} 
Observe that the statement is clear if $s_1 = s_0$ so assume that $s_1 >  s_0$. 
Assume for contradiction that the contrary holds. 
Accordingly, there is $a>0$ such that for all $k$ large enough, $\dist\left(\displaystyle\frac{\gamma^k(s_1)}{\|\gamma^k(s_1)\|},\overline Y_1\right)\geqslant a.$ Take any $N \geqslant 2$ and let $\displaystyle s_1' := s_1 - \frac{a}{c N} < s_1.$ Increasing $N$ if necessary so that $s_1' \geqslant s_0.$
In light of Lemma~\ref{NotFar}, we obtain
$$\left \|\frac{\gamma^{k}(s_1)}{\|\gamma^{k}(s_1)\|}-\frac{\gamma^{k}(s_1')}{\|\gamma^{k}(s_1')\|}\right\|\leqslant c|s_1-s_1'|=\frac{a}{N}.$$
Consequently, for all $k$ large enough, we have
$$\dist\left(\frac{\gamma^{k}(s_1')}{\|\gamma^{k}(s_1')\|},\overline Y_1\right)\geqslant\dist\left(\frac{\gamma^{k}(s_1)}{\|\gamma^{k}(s_1)\|},\overline Y_1\right) - \left\|\frac{\gamma^{k}(s_1)}{\|\gamma^{k}(s_1)\|}-\frac{\gamma^{k}(s_1')}{\|\gamma^{k}(s_1')\|}\right\|\geqslant a - \frac{a}{N} > 0.$$
It follows that $\displaystyle\frac{\gamma^{k}(s_1')}{\|\gamma^{k}(s_1')\|}$ does not have cluster points in $\overline Y_1.$
Since this fact holds for all $N$ large enough, we get a contradiction to the definition of $s_1.$ Therefore, the sequence $\displaystyle\frac{\gamma^k(s_1)}{\|\gamma^k(s_1)\|}$ must have a cluster point $x(s_1)$ in $\overline Y_1.$
\end{proof}

In light of Claim~\ref{Boundary}, we can find a cluster point $x(s_1)$ of the sequence $\displaystyle\frac{\gamma^k(s_1)}{\|\gamma^k(s_1)\|}$ in $\overline Y_1.$ 
Passing to a subsequence if necessary, we can assume that $\displaystyle\frac{\gamma^k(s_1)}{\|\gamma^k(s_1)\|}$ converges to $x(s_1)$ as $k\to+\infty.$ In order to define the next point of our sequence, we need to show that $x(s_1)$ is not a ``death end" in the boundary of $Y_1$, i.e., $Y_1$ is not the unique stratum in $\SA$ such that $x(s_1)\in\overline Y_1.$

\begin{claim}\label{Unisolated} 
There is a stratum $Y_2 \in \SA$ with $Y_2 \ne Y_1$ such that $x(s_1)\in \overline Y_2.$ 
\end{claim}

\begin{proof} 
Since $v \not \in \overline Y_1,$ we must have $s_1 < t_2.$ For each $N,$ let $\displaystyle\tau_{N} := s_1+\frac{1}{N}$ with $N$ large enough so that $\tau_N \leqslant t_2$ and let $w_N$ be a cluster point of the sequence $\displaystyle\frac{\gamma^k(\tau_N)}{\|\gamma^k(\tau_N)\|}$. It follows from Lemma~\ref{NotFar} that 
$w_N \in D_\infty({\tau_N})$ and that
$$\|w_N - x(s_1)\|\leqslant c|\tau_N - s_1|=\frac{c}{N}\to 0 \quad \textrm{ as } \quad N \to +\infty.$$ 
Hence the sequence $w_N$ converges to $x(s_1)$ as $N \to+\infty$. Since $w_N \in D_\infty({\tau_N}) \subset \mathbf{D}_\infty^a,$ there is a stratum $Y_2 \in \SA$ such that $Y_2$ contains infinite number of points of the sequence $w_N.$ Clearly, $x(s_1)\in \overline Y_2$. Note that, by definition of $s_1$, for all $N$ sufficiently large, the sequence $\displaystyle\frac{\gamma^k(\tau_N)}{\|\gamma^k(\tau_N)\|}$ does not have cluster points in $\overline Y_1$, so $w_N \not \in \overline Y_1$. Consequently $Y_2 \ne Y_1$ and the lemma is proved.
\end{proof}

If $v \in \overline Y_2,$ then it is clear that the sequence $u = x(s_0), x(s_1), v$ has the desired properties. So assume that $v \not \in \overline Y_2.$ Let
$$s_2:=\sup\{s\in[s_1,t_2] :\ \text{ the sequence } \frac{\gamma^k(s)}{\|\gamma^k(s)\|} \text{ has a cluster point in } \overline Y_2\},$$
and repeat the arguments in Claims~\ref{Boundary}~and~\ref{Unisolated} to get a point $x(s_2) \in \overline{Y}_2$ and a stratum $Y_3 \in \SA$ with $Y_3 \ne Y_2$ such that $x(s_2) \in \overline{Y}_3.$ Note that $s_1 < s_2$ because of the existence of $w_N$ in ${Y}_2$ for some $N $ large enough. (Recall that $w_N$ is a cluster point of the sequence $\displaystyle\frac{\gamma^k(\tau_N)}{\|\gamma^k(\tau_N)\|}$ with $\displaystyle\tau_N = s_1 + \frac{1}{N}.$)  Furthermore, by definition of $s_1$, the sequence $\displaystyle\frac{\gamma^k(s)}{\|\gamma^k(s)\|}$ does not have cluster points in $\overline Y_1$ for all $s>s_1$. So by induction, for each $i$, we can construct a sequence of points $u = x(s_0), x(s_1),\ldots, x(s_i)$ in $\mathbf{D}_\infty^a$ and a sequence of strata $Y_1, \ldots, Y_{i + 1}$ in $\SA$ satisfying the following conditions:
\begin{itemize}
\item $x(s_i) = \lim_{k \to \infty} \displaystyle\frac{\gamma^k(s_i)}{\|\gamma^k(s_i)\|} \in \overline{Y}_{i},$ 
\item $x(s_i)$ and $x(s_{i+1})$ lie in $\overline{Y}_{i + 1}$ (assuming that $v \not \in \overline{Y}_{i}),$ and
\item $\displaystyle\frac{\gamma^k(s)}{\|\gamma^k(s)\|}$ does not have cluster points in $\bigcup_{j=1}^i\overline Y_j$ for all $s > s_i.$
\end{itemize}
The first condition and Lemma~\ref{NotFar} together imply that
$$\| x(s_i) - x(s_{i+1})\|  \leqslant c|s_i-s_{i+1}|.$$
The third condition shows that the strata $Y_j$ are distinct. Since there is only a finite number of strata in $\SA,$ there must exist $p > 0$ such that $x(s_{p-1})$ and $v$ lie in $\overline Y_p$, whence the sequence 
$$u, x(s_1), \ldots, x(s_{p - 1}), v$$ 
has the desired properties. This ends the proof of the theorem.
\end{proof}

\begin{remark}
From the proof of Theorem~\ref{LocallyLipschitzHausdorff} we also have that $x(s_i) \in \overline{Y}_i \setminus Y_i$ for $i = 1, \ldots, p - 1.$ Since we do not use this fact, we leave the proof to the reader.
\end{remark}

As a consequence of Theorem~\ref{LocallyLipschitzHausdorff}, we can see that the set of tangent directions at infinity of the fiber of a polynomial function varies (locally Lipschitz) continuously except at a finite number of values.

\begin{corollary}\label{LocallyLipschitz} 
Let $f \colon \mathbb{R}^n \rightarrow \mathbb{R}$ be a polynomial function. Then the multivalued function 
$$D_\infty \colon \mathbb{R} \rightrightarrows \mathbb{S}^{n - 1}, \quad t \mapsto D_\infty(t),$$ 
is locally Lipschitz outside $K_\infty(f)$, i.e., for each $t_0\not\in K_\infty(f)$, there exist some constants $c>0$ and $\delta>0$ such that for all $t_1,t_2\in(t_0-\delta,t_0+\delta)$, we have 
\begin{equation}\label{LocallyLipschitzE}
D_\infty ({t_1})\subset D_\infty ({t_2})+c|t_1-t_2|\B^n.
\end{equation}
In addition, the mapping $t\mapsto\dim D_\infty ({t})$ is lower semicontinuous at $t_0.$
\end{corollary}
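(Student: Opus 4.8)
The plan is to derive the locally Lipschitz property of the multivalued function $D_\infty$ directly from Theorem~\ref{LocallyLipschitzHausdorff} by observing that the intrinsic Hausdorff distance in $\mathbf{D}_\infty^a$ dominates the Euclidean Hausdorff distance in $\mathbb{R}^n$. Fix $t_0 \notin K_\infty(f)$, and let $c > 0$ and $\delta > 0$ be the constants furnished by Theorem~\ref{LocallyLipschitzHausdorff}. For $t_1, t_2 \in (t_0 - \delta, t_0 + \delta)$ and any $u \in D_\infty(t_1)$, the theorem gives $\dist^g(u, D_\infty(t_2)) \leqslant c|t_1 - t_2|$, where $\dist^g$ is the intrinsic metric in $\mathbf{D}_\infty^a$. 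Since $\dist(u, v) \leqslant \dist^g(u, v)$ for any two points $u, v$ in $\mathbf{D}_\infty^a$ (a path in $\mathbf{D}_\infty^a$ is in particular a path in $\mathbb{R}^n$, and the Euclidean distance is the infimum of Euclidean lengths of all such paths), we obtain $\dist(u, D_\infty(t_2)) \leqslant c|t_1 - t_2|$. As $u \in D_\infty(t_1)$ was arbitrary, this is precisely the inclusion \eqref{LocallyLipschitzE}: $D_\infty(t_1) \subset D_\infty(t_2) + c|t_1 - t_2|\mathbb{B}^n$. Thus the multivalued function is locally Lipschitz at $t_0$ in the sense of the definition given in Section~\ref{semi-algebraicGeometry}.

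For the lower semicontinuity of $t \mapsto \dim D_\infty(t)$ at $t_0$, I would argue as follows. Write $m := \dim D_\infty(t_0)$; if $m = -1$ there is nothing to prove, so assume $m \geqslant 0$. By Lemma~\ref{GeometricAlgebraic}(ii), $D_\infty(t_0)$ is a semi-algebraic set of dimension $m$, hence it contains a point $u_0$ at which it is locally a $C^1$-manifold of dimension $m$; in particular there is $\rho > 0$ such that $D_\infty(t_0) \cap \mathbb{B}^n_\rho(u_0)$ contains an $m$-dimensional $C^1$-submanifold, and a small $m$-disc $\Sigma \subset D_\infty(t_0)$ around $u_0$ of some radius $r$. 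Shrinking $\delta$ from the first part so that $c\delta < r/4$, for $t$ with $|t - t_0| < \delta$ the inclusion $D_\infty(t_0) \subset D_\infty(t) + c|t - t_0|\mathbb{B}^n$ forces every point of $\Sigma$ to lie within distance $c|t - t_0| < r/4$ of $D_\infty(t)$, so $D_\infty(t)$ meets every ball of radius $r/4$ centered at a point of $\Sigma$. A standard covering/entropy argument (using Proposition~\ref{5.8} and Proposition~\ref{5.14}, since the diagram of $D_\infty(t)$ is uniformly bounded by Lemma~\ref{GeometricAlgebraic}(iii)) then shows that the portion of $D_\infty(t)$ near $\Sigma$ cannot be covered by too few small balls, which by Proposition~\ref{5.14} forces $\dim D_\infty(t) \geqslant m$; alternatively, one can project $D_\infty(t)$ orthogonally onto the tangent $m$-plane $T_{u_0}\Sigma$ and observe that the image must contain an open subset of a disc, again giving $\dim D_\infty(t) \geqslant m$.

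The routine part is the reduction from Theorem~\ref{LocallyLipschitzHausdorff} via $\dist \leqslant \dist^g$, which is immediate. The one place that needs a little care is the lower semicontinuity of the dimension: the subtlety is that the inclusion \eqref{LocallyLipschitzE} only says $D_\infty(t_0)$ is \emph{approximated} by $D_\infty(t)$, not contained in it, so one must convert ``$D_\infty(t)$ is $\varepsilon$-dense near an $m$-disc'' into a genuine lower bound on $\dim D_\infty(t)$. The cleanest route is the entropy estimate: an $\varepsilon$-dense subset of an $m$-dimensional disc of fixed radius requires at least $\mathrm{const}\cdot\varepsilon^{-m}$ balls of radius $\varepsilon$ to cover, and by Proposition~\ref{5.14} applied to (a bounded truncation of) $D_\infty(t)$ this is incompatible with $\dim D_\infty(t) < m$ once $\varepsilon$ is small. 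I expect this entropy step to be the main (though still modest) obstacle; everything else is formal.
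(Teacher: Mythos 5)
Your reduction of the Lipschitz inclusion from Theorem~\ref{LocallyLipschitzHausdorff} via $\dist \leqslant \dist^g$ is exactly what the paper does, and it is correct.

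For the lower semicontinuity of $t \mapsto \dim D_\infty(t)$, you take a genuinely different route. The paper forms the incidence set $A_{(t_0,t_0+\delta)} = \{(u,t) : u \in D_\infty(t)\}$, invokes Hardt's theorem (Theorem~\ref{HardtTheorem}) to make $\dim D_\infty(t)$ constant on a one-sided interval, shows $D_\infty(t) \ne \emptyset$ there (else $t_0$ would be a bifurcation value), identifies $\dim A_{(t_0,t_0+\delta')} = \dim D_\infty(t)+1$, and then uses the Lipschitz inclusion to place $D_\infty(t_0)\times\{t_0\}$ in the frontier $\partial A_{(t_0,t_0+\delta')}$, so that the strict dimension drop for semi-algebraic frontiers (\cite[Prop.~3.16]{Coste2000-1}) finishes the proof. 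Your approach instead is quantitative: a smooth $m$-disc $\Sigma \subset D_\infty(t_0)$ is $\varepsilon_t$-covered by $D_\infty(t)$ with $\varepsilon_t = c|t-t_0|$, so $M(\varepsilon_t, D_\infty(t)) \gtrsim \varepsilon_t^{-m}$; combined with the uniform upper bound $M(\varepsilon_t, D_\infty(t)) \leqslant (4/\varepsilon_t)^{l} C(n)\nu(l)\alpha(n)$ from Proposition~\ref{5.8} (uniformity in $t$ coming from Lemma~\ref{GeometricAlgebraic}(iii)), the assumption $l = \dim D_\infty(t) < m$ becomes untenable as $\varepsilon_t \to 0$. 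This is sound, and it is very much in the spirit of the paper's proof of Theorem~\ref{VolumeContinuous}; the paper's Hardt-based argument is shorter and more structural. Two caveats. First, to run your argument cleanly you need to handle the dependence of the constant $4^l C(n)\nu(l)\alpha(n)$ on the unknown $l = \dim D_\infty(t)$; since $l$ ranges over the finite set $\{0,\ldots,n-3\}$ (under the hypothesis $l < m \leqslant n-2$), one can take the worst constant, but you should say so. Second, your ``alternative'' orthogonal-projection argument is wrong as stated: a set that is merely $\varepsilon$-dense near $\Sigma$ for a fixed $\varepsilon > 0$ (e.g., a finite set) projects to an $\varepsilon$-dense but not necessarily open subset of the tangent plane, so the projection need not contain an open disc and nothing about $\dim D_\infty(t)$ follows; the entropy route is the one that works.
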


\begin{proof} 
The first assertion follows directly from Theorem~\ref{LocallyLipschitzHausdorff}. It remains to show that 
$$\dim D_\infty ({t})\geqslant\dim D_\infty ({t_0})$$ 
for $t$ close enough to $t_0.$ If $D_\infty ({t_0})=\emptyset$ then there is nothing to prove; so assume that $D_\infty ({t_0}) \ne \emptyset.$ 
Clearly it suffices to consider only the case where $t>t_0$. Let 
$$A_{(t_0,t_0+\delta)}:=\{(u, t)\in \mathbb S^{n-1}\times (t_0,t_0+\delta):\ u \in D_\infty(t)\},$$
which is a semi-algebraic set. Let $\pi \colon A_{(t_0,t_0+\delta)} \to (t_0,t_0+\delta)$ be the projection on the last coordinate. Obviously 
$$\pi^{-1}(t)=A_{(t_0,t_0+\delta)}\cap (\mathbb S^{n-1}\times\{t\})=D_\infty(t)\times\{t\}.$$ 
In light of Theorem~\ref{HardtTheorem}, there exists a positive constant $\delta'\leqslant\delta$ such that $\pi$ is a semi-algebraic fibration on $A_{(t_0,t_0+\delta')}.$ Consequently, the function $t\mapsto\dim D_\infty ({t})$ is constant on $(t_0,t_0+\delta').$ For $t\in (t_0,t_0+\delta')$, observe that $\dim D_\infty ({t}) \ne -1$ since otherwise, $f^{-1}({t})$ is compact while $f^{-1}({t_0})$ is not which implies that $t_0$ is a bifurcation value of $f$, so $t_0\in K_\infty(f)$ which is a contradiction. Therefore $D_\infty ({t}) \ne \emptyset$, which yields 
$$\dim A_{(t_0,t_0+\delta')}=\dim D_\infty ({t}) + 1.$$
Now by \eqref{LocallyLipschitzE}, it is not hard to see that $D_\infty({t_0})\times\{t_0\}\subset\partial A_{(t_0,t_0+\delta')}.$ Thus, by~\cite[Proposition~3.16]{Coste2000-1},
$$\dim D_\infty({t_0})\leqslant \dim A_{(t_0,t_0+\delta')}-1=\dim D_\infty ({t}).$$ 
This finishes the proof of the corollary.
\end{proof}

As a consequence of Theorem~\ref{LocallyLipschitzHausdorff}, we deduce below that the volume function 
$$\mathbb{R} \to \mathbb{R}, \quad t\mapsto \vol_{n-2}(D_\infty(t))$$ 
is locally Lipschitz outside the set $K_\infty(f)$. The main idea of the proof is that the entropy of any  semi-algebraic set having ``small width" can be estimated by the entropy of a semi-algebraic set of lower dimension. Since the dimension of $D_\infty(t)$ is at most $n-2$ by Lemma~\ref{GeometricAlgebraic}, it is natural to consider the volume in this dimension.

\begin{theorem}\label{VolumeContinuous} 
Let $f \colon \mathbb{R}^n \rightarrow \mathbb{R}$ be a polynomial function. Then the volume function 
$$\mathbb{R} \rightarrow \mathbb{R}, \quad t\mapsto \vol_{n-2}(D_\infty(t)),$$ 
is locally Lipschitz outside the set $K_\infty(f).$
\end{theorem}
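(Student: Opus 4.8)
The plan is to derive Theorem~\ref{VolumeContinuous} from Theorem~\ref{LocallyLipschitzHausdorff} together with the entropy–volume estimates of Propositions~\ref{5.8}~and~\ref{5.14}. Fix $t_0 \notin K_\infty(f)$ and take the constants $c>0$ and $\delta>0$ supplied by Theorem~\ref{LocallyLipschitzHausdorff}, so that for $t_1,t_2 \in (t_0-\delta,t_0+\delta)$ one has $\dist_H^g(D_\infty(t_1),D_\infty(t_2)) \le c|t_1-t_2|$ inside $\mathbf{D}_\infty^a$; in particular $D_\infty(t_1) \subset \mathscr{N}_{c|t_1-t_2|}(D_\infty(t_2))$ and vice versa (the ambient Hausdorff distance is bounded by the intrinsic one). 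Since $D_\infty(t) \subset \mathbb{S}^{n-1} \subset \overline{\B}_1^n$ and by Lemma~\ref{GeometricAlgebraic}(iii) the diagram of $D_\infty(t)$ is bounded in terms of $n$ and $\deg f$ only, the constants $c_1,c_2$ in Proposition~\ref{5.14} and the quantities $C(n),\nu(l),\alpha(n)$ in Proposition~\ref{5.8} can be chosen \emph{uniformly} in $t \in (t_0-\delta,t_0+\delta)$. This uniformity is what makes the argument work.

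The key step is to bound $\vol_{n-2}(D_\infty(t_1)) - \vol_{n-2}(D_\infty(t_2))$. Note that $\vol_{n-2}$ is, up to a dimensional constant, the top variation $V_{n-2}$ of a set of dimension $\le n-2$ in $\mathbb{R}^n$. Write $\eta := c|t_1-t_2|$ and suppose $\dim D_\infty(t_1) = \dim D_\infty(t_2) = n-2$ (the cases where some fiber has lower-dimensional tangent set are handled separately and more easily, since then the corresponding volume is zero and one only needs an upper bound on the other via Proposition~\ref{5.8}). Applying Proposition~\ref{5.14} to $D_\infty(t_1)$ with $\epsilon = \eta$ gives
$$
c_1 V_{n-2}(D_\infty(t_1)) \eta^{-(n-2)} \;\le\; M(\eta, D_\infty(t_1)).
$$
Since $D_\infty(t_1) \subset \mathscr{N}_\eta(D_\infty(t_2))$ and $D_\infty(t_2)$ has the uniformly bounded diagram, Proposition~\ref{5.8} bounds $M(\eta, D_\infty(t_1)) \le (4/\eta)^{n-2} C(n)\nu(n-2)\alpha(n)$. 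Comparing these two bounds already shows $V_{n-2}$ is uniformly bounded on $(t_0-\delta,t_0+\delta)$; to get the Lipschitz estimate one should instead cover $D_\infty(t_1)$ by a near-optimal collection of $\eta$-balls realizing $M(\eta,D_\infty(t_1))$, discard those meeting $D_\infty(t_2)$, and estimate the volume contribution of the remainder. More cleanly: $D_\infty(t_1) \setminus \mathscr{N}_{\eta}(\text{something})$ is empty, so one instead writes $D_\infty(t_1) \subset (D_\infty(t_1) \cap \mathscr{N}_\eta(D_\infty(t_2)))$, and bounds the $\vol_{n-2}$ of the ``collar'' $\{x \in D_\infty(t_1) : \dist(x, D_\infty(t_2)) \le \eta\} \setminus D_\infty(t_2)$ by the entropy estimate of Proposition~\ref{5.8} applied with the lower-dimensional boundary-type set, multiplied by $\eta^{n-2}$; this collar has $(n-2)$-volume $O(\eta)$ precisely because its $\eta$-entropy is $O(\eta^{-(n-3)})$ (one dimension is ``used up'' by the thinness). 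Symmetrizing in $t_1,t_2$ then gives $|\vol_{n-2}(D_\infty(t_1)) - \vol_{n-2}(D_\infty(t_2))| \le C'\eta = C'c|t_1-t_2|$.

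The main obstacle is making the ``collar of width $\eta$ has $(n-2)$-volume $O(\eta)$'' step rigorous with constants uniform in $t$. The honest route is: the symmetric difference $D_\infty(t_1) \triangle D_\infty(t_2)$, or more precisely its closure, is contained in the $\eta$-neighborhood of a semi-algebraic set of dimension $\le n-3$ — namely (a suitable piece of) the frontier where $D_\infty(t_1)$ and $D_\infty(t_2)$ cease to coincide, which by Lemma~\ref{GeometricAlgebraic}(iv) and the analysis of $X_t$ has uniformly bounded diagram and dimension $\le n-3$ on the set where the two fibers' tangent sets genuinely differ. Then Proposition~\ref{5.8} gives $M(\eta, D_\infty(t_1)\triangle D_\infty(t_2)) \le (4/\eta)^{n-3} C(n)\nu(n-3)\alpha(n)$, and Proposition~\ref{5.14} (upper bound) converts this into $\vol_{n-2}(D_\infty(t_1)\triangle D_\infty(t_2)) \le c_2 \sum_{i\le n-3} V_i \eta^{?}$ — one must check the top term vanishes and extract a genuine $O(\eta)$ bound on $\vol_{n-2} = $ const$\cdot V_{n-2}$, which follows because a set of dimension $\le n-3$ contributes $0$ to $V_{n-2}$ and the entropy of its $\eta$-neighborhood times $\eta^{n-2}$ is $O(\eta)$. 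The careful bookkeeping here — identifying the right $(n-3)$-dimensional set controlling the symmetric difference, and checking its diagram is $t$-uniform via Lemma~\ref{GeometricAlgebraic} — is where the real work lies; the rest is assembling the inequalities.
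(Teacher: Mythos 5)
Your plan is essentially the paper's: reduce to bounding $\vol_{n-2}$ of the two set-differences $D_\infty(t_2)\setminus D_\infty(t_1)$ and $D_\infty(t_1)\setminus D_\infty(t_2)$, show each is contained in the $c|t_1-t_2|$-neighborhood of a semi-algebraic set of dimension $\leqslant n-3$ with $t$-uniform diagram, and run the entropy--variation estimates of Propositions~\ref{5.8} and~\ref{5.14}. Your first, more naive idea (comparing the two propositions directly on $D_\infty(t_1)$) only yields boundedness, and you rightly discard it for the symmetric-difference approach.

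Where your sketch is genuinely incomplete is exactly the point you flag as ``where the real work lies'': \emph{which} lower-dimensional set controls the difference. Your candidate, ``a suitable piece of the frontier where $D_\infty(t_1)$ and $D_\infty(t_2)$ cease to coincide,'' depends on both $t_1$ and $t_2$, so its diagram and dimension are not manifestly $t$-uniform, and it is not obvious how to extract it cleanly from the available data. The paper instead uses $D_\infty(t_1)\setminus X_{t_1}$, a set depending only on $t_1$ whose diagram is uniformly bounded by Lemma~\ref{GeometricAlgebraic}(iv) and whose dimension $l$ is $\leqslant n-3$. The required containment
$$D_\infty(t_2)\setminus D_\infty(t_1)\subset\mathscr{N}_{c|t_1-t_2|}\bigl(D_\infty(t_1)\setminus X_{t_1}\bigr)$$
is obtained from a short intrinsic-distance observation: for $u\in D_\infty(t_2)\setminus D_\infty(t_1)$, one has $\dist^g(u,D_\infty(t_1)\setminus X_{t_1})=\dist^g(u,D_\infty(t_1))$, because any continuous path in $\mathbf{D}_\infty^a$ from $u$ to $D_\infty(t_1)$ first meets $D_\infty(t_1)$ at a point that is a limit of points of $\mathbf{D}_\infty^a\setminus D_\infty(t_1)$, and such a point lies outside $X_{t_1}$ by the characterization~\eqref{Xt1}. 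Combined with Theorem~\ref{LocallyLipschitzHausdorff}, this gives the neighborhood containment; then Proposition~\ref{5.8} bounds $M(\eta,D_\infty(t_2)\setminus D_\infty(t_1))$ by a constant times $\eta^{-l}$, Proposition~\ref{5.14} (lower bound) converts this into $V_{n-2}(D_\infty(t_2)\setminus D_\infty(t_1))\leqslant \mathrm{const}\cdot\eta^{n-2-l}$, and since $l\leqslant n-3$ and $|t_1-t_2|$ is small this is $\leqslant\mathrm{const}\cdot|t_1-t_2|$. So your bookkeeping scheme is right; the missing ingredient is the choice of $D_\infty(t_1)\setminus X_{t_1}$ as the controlling set together with the intrinsic-distance equality that justifies the $\eta$-neighborhood inclusion.
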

\begin{proof} %The proof is trivial if $d \leqslant 1$ so suppose that $d\geqslant 2$. 
Fix $t_0 \not\in K_\infty(f)$ and let $c>0$ and $\delta>0$ be the constants determined in Theorem~\ref{LocallyLipschitzHausdorff}.  Since Theorem~\ref{LocallyLipschitzHausdorff} still holds if we shrink $\delta$ and increase $c$, we can suppose that $\delta\leqslant\frac{1}{2}$ and $c\geqslant 1.$ 
For $t_1,t_2\in (t_0-\delta,t_0+\delta)$, one has 
\begin{eqnarray*}
|\vol_{n-2}(D_\infty({t_2}))  \ - && \hspace{-1cm}\ \vol_{n-2}(D_\infty({t_1}))|\\
&=&|\vol_{n-2}(D_\infty({t_2})\setminus D_\infty({t_1}))+\vol_{n-2}(D_\infty({t_2})\cap D_\infty({t_1}))-\\
&&(\vol_{n-2}(D_\infty({t_1})\setminus D_\infty({t_2}))+\vol_{n-2}(D_\infty({t_1})\cap D_\infty({t_2})))|\\
&=&|\vol_{n-2}(D_\infty({t_2})\setminus D_\infty({t_1}))-\vol_{n-2}((D_\infty({t_1})\setminus D_\infty({t_2}))|\\
&\leqslant&\vol_{n-2}((D_\infty({t_1})\setminus D_\infty({t_2}))+\vol_{n-2}(D_\infty({t_2})\setminus D_\infty({t_1})).
\end{eqnarray*}
Now the proof is completed by demonstrating the following inequalities:
\begin{eqnarray}
\label{1lim}\vol_{n-2}((D_\infty({t_2})\setminus D_\infty({t_1})) &\leqslant& a|t_1-t_2|, \\
\label{2lim}\vol_{n-2}((D_\infty({t_1})\setminus D_\infty({t_2})) &\leqslant& a|t_1-t_2|,
\end{eqnarray}
where $a$ is a positive constant not depending on $t.$

We will prove only~\eqref{1lim} since proving~\eqref{2lim} is completely similar. Observe that~\eqref{1lim} is trivial if $\dim ((D_\infty({t_2})\setminus D_\infty({t_1}))<n-2$, so suppose that $\dim ((D_\infty({t_2})\setminus D_\infty({t_1}))=n-2$. 
For each $t\in\mathbb R$, let $X_{t}\subseteq D_\infty({t})$ be the semi-algebraic set defined in Lemma~\ref{GeometricAlgebraic}.
Then $D_\infty({t}) \setminus X_{t}$ is a semi-algebraic set. 
By Lemma~\ref{GeometricAlgebraic}, there are representations of $D_\infty({t})$ and $X_t$ such that the diagrams $\mathscr{D}(D_\infty({t}))$ and $\mathscr{D}(X_{t})$ depend only on $n$ and the degree of $f$. Hence $\mathscr{D}(D_\infty({t})\setminus X_t)$ also depends only on $n$ and the degree of $f$.
Consequently, we can fix a diagram
$$\mathscr{D} := \mathscr{D}(D_\infty({t}))\setminus X_{t}$$ 
depending only on $n$ and the degree of $f$. 

We claim that 
\begin{equation}\label{d=}
\dist^{g}(u,D_\infty({t_1})\setminus X_{t_1})=\dist^{g}(u,D_\infty({t_1}))
\end{equation}
for any $u\in D_\infty({t_2})\setminus D_\infty({t_1}).$ To this end, take arbitrarily a continuous curve $\gamma\colon[0,1]\to \mathbf{D}_\infty^a$ such that $\gamma(0)=u$ and $\gamma(1)\in D_\infty({t_1}).$ There exists $T \in (0,1]$ such that $\gamma(T)\in D_\infty({t_1})$ and $\gamma(t)\not\in D_\infty({t_1})$ for $t\in[0,T).$ 
Then clearly $\gamma(T)\in D_\infty({t_1})\setminus X_{t_1}.$ 
Hence
$$\dist^{g}(u,D_\infty({t_1})\setminus X_{t_1})\leqslant\dist^{g}(u,D_\infty({t_1})).$$
Observe that the inequality
$$\dist^{g}(u,D_\infty({t_1})\setminus X_{t_1})\geqslant\dist^{g}(u,D_\infty({t_1}))$$
is obvious so the inequality~\eqref{d=} holds.

By Theorem~\ref{LocallyLipschitzHausdorff} and~\eqref{d=}, we have
$$\dist(u,D_\infty({t_1})\setminus X_{t_1})\leqslant\dist^{g}(u,D_\infty({t_1})\setminus X_{t_1})=\dist^{g}(u,D_\infty({t_1}))\leqslant c|t_1-t_2|,$$
i.e., $u \in \mathscr{N}_{c|t_1-t_2|}(D_\infty({t_1})\setminus X_{t_1}).$ Consequently 
$$D_\infty({t_2})\setminus D_\infty({t_1})\subset \mathscr{N}_{c|t_1-t_2|}(D_\infty({t_1})\setminus X_{t_1}).$$ 
Set $l:=\dim(D_\infty({t_1})\setminus X_{t_1})$. Clearly $l<n-2$. 
Applying Proposition~\ref{5.8}, we get the following upper bound for the $c|t_1-t_2|$-entropy of $D_\infty({t_2})\setminus D_\infty({t_1})$:
\begin{equation}\label{4.1}\begin{array}{lll}
M(c|t_1-t_2|,D_\infty({t_2})\setminus D_\infty({t_1})) 
&\leqslant& \displaystyle\left(\frac{4}{c|t_1-t_2|}\right)^{l}C(n)\nu(l)\alpha(n,\mathscr{D}(D_\infty({t_1})\setminus X_{t_1}))\\
&=&         \displaystyle\left(\frac{4}{c|t_1-t_2|}\right)^{l}C(n)\nu(l)\alpha(n,\mathscr{D}),
\end{array}\end{equation}
where $C(n)$ and $\nu(l)$ are the positive constants defined in Proposition~\ref{5.8}, which depend only on $n$ and $l$ respectively; $\alpha(n,\mathscr{D})$ is a positive constant depending only on $n$ and $\mathscr{D}.$

On the other side, by Proposition~\ref{5.14}, there is a positive constant $C_1$ depending only on the diagram of $D_\infty({t_2})\setminus D_\infty({t_1})$ such that 
\begin{equation}\label{4.2}C_1\sum_{i=1}^{n-2}V_i(D_\infty({t_2})\setminus D_\infty({t_1}))\left(\frac{1}{c|t_1-t_2|}\right)^{i}\leqslant M(c|t_1-t_2|,D_\infty({t_2})\setminus D_\infty({t_1})),
\end{equation}
where $V_i(D_\infty({t_2})\setminus D_\infty({t_1}))$ is the $i$-th variation of $D_\infty({t_2})\setminus D_\infty({t_1})$. Note that the diagram of $D_\infty({t})$ does not depend on $t$ in light of~Lemma~\ref{GeometricAlgebraic}, so neither does $C_1$. Now, combining~\eqref{4.1}~and~\eqref{4.2}, we have
\begin{eqnarray*}
\vol_{n-2}(D_\infty({t_2})\setminus D_\infty({t_1}))\left(\frac{1}{c|t_1-t_2|}\right)^{n-2} 
&\leqslant & \sum_{i=1}^{n-2}V_i(D_\infty({t_2})\setminus D_\infty({t_1}))\left(\frac{1}{c|t_1-t_2|}\right)^{i}\\
&\leqslant & \frac{1}{C_1}M(c|t_1-t_2|,D_\infty({t_2})\setminus D_\infty({t_1}))\\
&\leqslant&\frac{1}{C_1}\left(\frac{4}{c|t_1-t_2|}\right)^{l}C(n)\nu(l)\alpha(n, \mathscr{D}).
\end{eqnarray*}
Therefore 
\begin{eqnarray*}
\vol_{n-2}(D_\infty({t_2})\setminus D_\infty({t_1}))&\leqslant & \frac{4^l}{C_1}(c|t_1-t_2|)^{n-2-l}C(n)\nu(l)\alpha(n, \mathscr{D})\\
&= &\frac{4^l}{C_1}c^{n-2-l}(|t_1-t_2|)^{n-3-l}C(n)\nu(l)\alpha(n, \mathscr{D})|t_1-t_2|\\
&\leqslant &\frac{4^{n - 2 - 1}}{C_1}c^{n-2}C(n)\nu(n-3)\alpha(n, \mathscr{D})|t_1-t_2|,
\end{eqnarray*}
where the last inequality follows from the following facts: 
$$|t_1-t_2|<2\delta\leqslant 1,\ c\geqslant 1,\ n-3\geqslant l \text{ and } \nu(n-3)\geqslant\nu(l).$$ 
This completes the proof of \eqref{1lim} and hence that of the theorem. 
\end{proof}

%------------------------------------------------------------------------------------------------------------------------------------

\noindent{\bf Questions.} 
\begin{enumerate}[{\rm (1)}]
\item Is the function $t\mapsto\dim D_\infty ({t})$ constant on each connected component of $\mathbb R\setminus K_\infty(f)?$
\item Does $D_\infty(t)$ have the same topology for all $t$ in a connected component of $K_\infty(f)$?
\item How about the cases of polynomial mappings, semi-algebraic or definable functions and mappings?
\end{enumerate}

\subsection*{Acknowledgments} 
We would like to thank Olivier Le Gal for some helpful discussion during the preparation the paper. This work was partially performed while the author$\dagger$ had been visiting the laboratory LAMA $-$ Universit\'e Savoie Mont Blanc $-$ CNRS research unit number 5127 by benefiting a ``poste rouge" of the CNRS. The author$\dagger$ would like to thank the laboratory, INSMI and LIA Formath Vietnam (CNRS) for hospitality and support.

\bibliographystyle{abbrv}

%\bibliography{D:/Submission/DST-BibPureMath1,D:/Submission/DST-BibAppMath1}

\end{document}